\def \noame{\noalign{\medskip}}
\newtheorem{corollary}{Corollary}[section]
\newtheorem{definition}[corollary]{Definition}
\newtheorem{lemma}[corollary]{Lemma}
\newtheorem{proposition}[corollary]{Proposition}
\newtheorem{remark}[corollary]{Remark}
\newtheorem{theorem}[corollary]{Theorem}
\newfont{\sBlackboard}{msbm10 scaled 900}
\newcommand{\mylabel}[1]{\label{#1}
            \ifx\undefined\stillediting
            \else \fbox{$#1$}\fi }
\newcommand{\BE}{\begin{equation}}
\newcommand{\EEQ}{\end{equation}}
\newcommand{\rfb}[1]{\mbox{\rm
   (\ref{#1})}\ifx\undefined\stillediting\else:\fbox{$#1$}\fi}
\newfont{\Blackboard}{msbm10 scaled 1200}
\newfont{\roma}{cmr10 scaled 1200}
\def\CC{\rm \hbox{C\kern-.56em\raise.4ex
         \hbox{$\scriptscriptstyle |$}\kern+0.5 em }}
\newcommand{\ep}{\varepsilon}
\def\n{|\kern -.05cm{|}\kern -.05cm{|}}
\newcommand{\mm}    {{\hbox{\hskip 0.5pt}}}
\newcommand{\bluff} {{\hbox{\raise 15pt \hbox{\mm}}}}
\def\section{\@startsection {section}{1}{\z@}{-3.5ex plus -1ex minus
    -.2ex}{2.3ex plus .2ex}{\large\bf}}
\def\be{\begin{equation}}
\def\ee{\end{equation}}
\date{ }
\begin{document}
\thispagestyle{empty}
\title{\bf Theoretical derivation of Darcy's law for fluid flow \\
in thin porous media}
\maketitle
\vspace{-50pt}
\begin{center}
Francisco Javier SU\'AREZ-GRAU\footnote{Facultad de Matem\'aticas. Universidad de Sevilla. 41012-Sevilla (Spain) grau@us.es}
 \end{center}

\vskip20pt

 \renewcommand{\abstractname} {\bf Abstract}
\begin{abstract} 
In this paper we study  stationary incompressible Newtonian fluid flow in a thin porous media. The media under consideration is a bounded perforated $3D$ domain confined between two parallel plates. The description of the domain  includes two small parameters:  $\ep$ representing the distance between pates and $a_\ep$ connected to the microstructure of the domain such that $a_\ep\ll \ep$.  We consider the classical setting of perforated media, i.e. $a_\ep$-periodically distributed solid (not connected) obstacles of  size $a_\ep$.  The goal of this paper is to introduce a version of the unfolding method, depending on both parameters $\ep$ and $a_\ep$, and then to
derive the corresponding  $2D$ Darcy's law. \end{abstract}
\bigskip\noindent
 {\small \bf AMS classification numbers:}  76A20, 76M50, 35B27, 35Q30. \\
\noindent {\small \bf Keywords:} Homogenization, Stokes equations, Darcy's law, thin porous media, thin film fluids.

\section {Introduction}\label{S1}
The problem of Stokes fluid flows in a periodically perforated domains with obstacles of the same
size as the period has been widely treated in the literature. As is well known, such kind of flows are generally modelled by Darcy's law, see Darcy \cite{Darcy}. From the mathematical point of view, the transition between Stokes equations to Darcy's law was formally obtained in Sanchez-Palencia \cite{Sanchez}  and rigorously  in Tartar  \cite{Tartar}. Since then,  several approaches to derive Darcy's law have been used by classical authors in homogenization theory, such as Allaire \cite{Allaire0}, Hornung \cite{Hornung} and Lions \cite{Lions} among others. The goal of this paper is to generalize classical results of perforated media to the case of thin porous media, which  by definition includes two small parameters: one called $\ep$ is connected to the  fluid  film thickness and the other denoted by $a_\ep$ to the microstructure representing the size of the obstacles and the interspatial distance between them. 

The case of thin porous media has been recently considered taking as microstructure a periodic array of vertical cylinders  of size and period $a_\ep$ confined between two parallel plates with distance $\ep$ in Fabricius {\it et al.} \cite{Fabricius}.  Thus, depending on the relation between parameters $\ep$ and $a_\ep$, different regimes are obtained:  proportionally thin porous media ($a_\ep\approx \ep$),  homogeneously thin porous media ($a_\ep\ll \ep$) and very thin porous medium ($a_\ep\gg \ep$). For each case, a permeability tensor is obtained by solving local problems. In the critical case, the local problems are 3D, while they are 2D in the
other cases, which is a considerable simplification. This result is proved in Fabricius {\it et al.}  \cite{Fabricius} by using the multiscale expansion method, which is a formal but powerful tool to analyze homogenization problems, and later rigorously developed in Anguiano and Su\'arez-Grau \cite{Anguiano_MJOM} by using an adaptation of the periodic unfolding method, see Arbogast {\it et al.} \cite{arbogast} and Cioranescu {\it et al.} \cite{Ciora, Ciora2}, which is introduced in Anguiano and Su\'arez-Grau \cite{Anguiano1}.

The goal of this paper is to consider the classical microstructure of perforated domains, i.e. consider  $a_\ep$-periodically distributed solid (not connected) obstacles of  the same size  and then, to study the influence of the thickness of the domain $\ep$ to derive the corresponding Darcy's law.  In this case, the restriction $a_\ep\ll \ep$ has to be naturally imposed. To our knowledge, this problem has only been considered  in the case of modelling  a thin film passing a thin porous media by Bayada \cite{Bayada_coupling}, but only the 2D case is considered. In this paper,  we consider the 3D case and prove the convergence of the homogenization process when $\ep$ and $a_\ep$ go to zero. To to this,  the microgeometry  of the thin porous media requires a version of the unfolding method depending on both parameters $\ep$ and $a_\ep$, which can be applied to other problems and also in the 2D case.  As a result, we rigorously derive the corresponding 2D Darcy's law, which is different compared to that  obtained in the case of homogeneously thin porous media obtained in Fabricius {\it et al.}  \cite{Fabricius}. 

The structure of the paper is as follows. In Section \ref{sec:domain} we introduce the domain and some useful notation is given in Section \ref{sec:notation}. In Section \ref{sec:statement}, we describe the statement of the problem and give the main result (Theorem \ref{mainthm}), whose proof is provided in Section \ref{sec:proofs}. 

\section{Definition of the domain} \label{sec:domain} Let $\omega$ be a smooth, bounded and connected set in $\mathbb{R}^2$. We consider a positive parameter $\ep$ which describes the height of the domain, i.e. we define $Q_\ep=\omega\times (0,\ep)$ as the thin domain without microstructure. 
To describe the microstructure of the domain $Q_\ep$, we consider the positive and small parameter $a_\ep$ such that 
\begin{equation}\label{parameters}
\lim_{\ep\to 0} {a_\ep\over \ep}=0.
\end{equation}
We denote $Y=(-1/2,1/2)^3$ the unitary cube in $\mathbb{R}^3$ as the reference cell and  $T$ an open connected subset of $Y$ with a smooth boundary  $\partial T$ such that $\overline T\subset Y$. We denote $Y_f=Y\setminus \overline T$. Thus, for $k\in\mathbb{Z}^3$, each cell $Y_{k,a_\varepsilon}=a_\varepsilon k+a_\varepsilon Y$ is similar to the unit cell $Y$ rescaled to size $a_\varepsilon$ and $T_{k,a_\varepsilon}=a_\varepsilon k+a_\varepsilon T$ is similar to $T$ rescaled to size $a_\varepsilon$. We denote $Y_{f_k,a_\varepsilon}=Y_{k,a_\varepsilon}\setminus \overline T_{k,a_\varepsilon}$ (see Figure \ref{fig:cell}).
\begin{figure}[h!]
\begin{center}
\includegraphics[width=4cm]{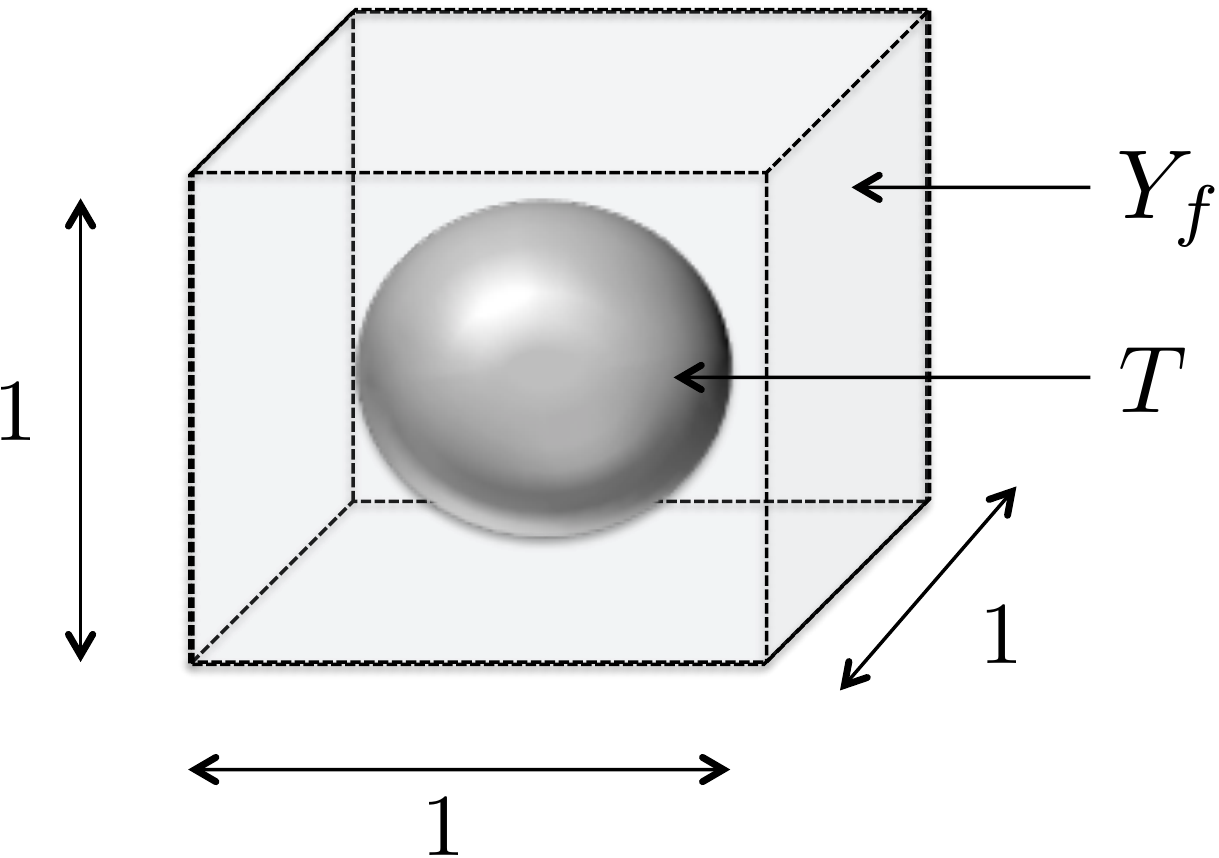}
\hspace{2.5cm}
\raisebox{.5\height}{\includegraphics[width=3cm]{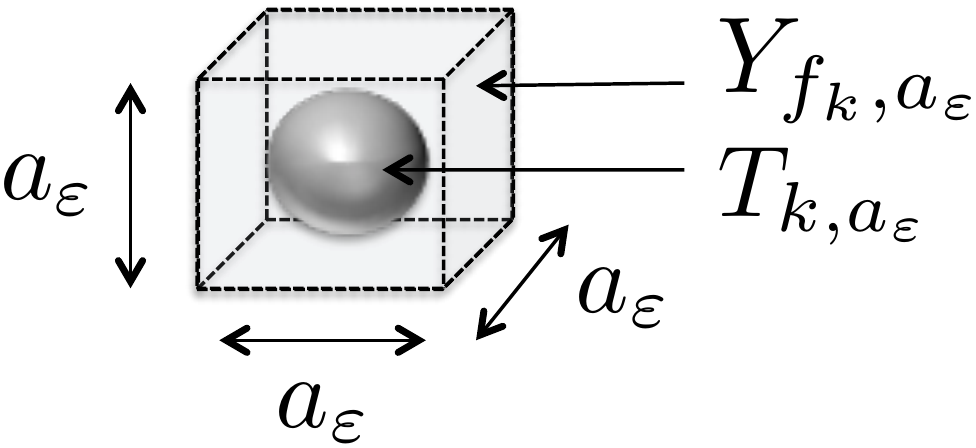}}
\end{center}
\vspace{-0.4cm}
\caption{View of the reference cell  $Y$ (left) and the rescaled cell $Y_{k,a_\ep}$ (right).}
\label{fig:cell}
\end{figure}
We denote by $\tau(\overline T_{k,a_\ep})$ the set of all translated images of $\overline T_{k,a_\ep}$. The set $\tau(\overline T_{k,a_\ep})$ represents the obstacles in $\mathbb{R}^3$. The thin porous media $\Omega_\ep$ is defined by  (see Figure \ref{fig:domain})
\begin{equation}\label{Omegaep}
\Omega_\ep=Q_\ep\setminus \bigcup_{k\in \mathcal{K}_\ep}\overline T_{k,a_\varepsilon},
\end{equation} where   $\mathcal{K}_\ep:=\left\{k\in\mathbb{Z}^N\,:\, Y_{k,a_\varepsilon}\cap Q_\ep\neq \emptyset\right\}$. By construction,  $\Omega_\varepsilon$ is a periodically perforated domain with obstacles of the same size as the period.
We make the assumption that the obstacles $\tau(\bar T_{k,a_\ep})$ do no intersect the boundary $\partial Q_\ep$. We denote by $T_{\varepsilon}$ the set of all the obstacles contained in $\Omega_\ep$. Then,   $T_\ep$ is a finite union of obstacles, i.e. 
 $$T_{\varepsilon}=\bigcup_{k\in \mathcal{K}_\ep}\overline T_{k,a_\ep}.$$
 \begin{figure}[h!]
\begin{center}
\includegraphics[width=7cm]{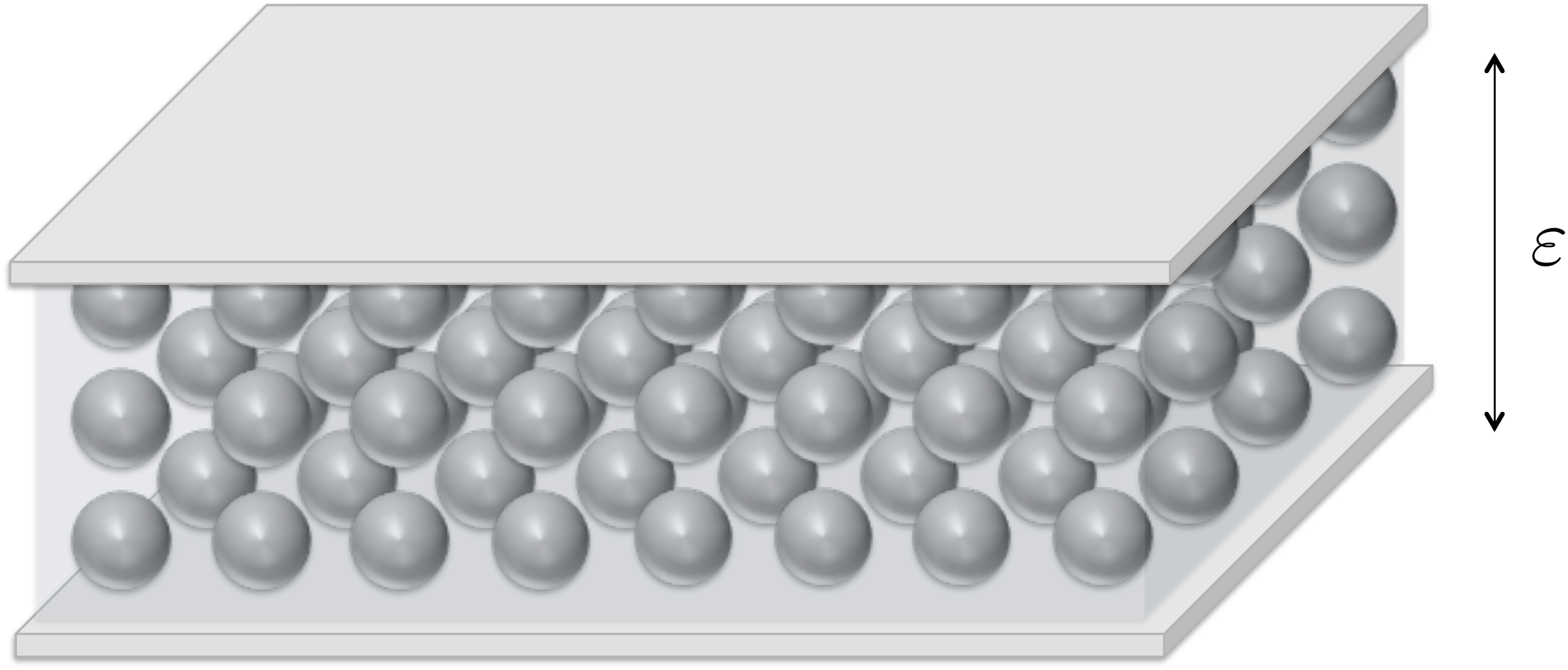}
\hspace{2cm}
\includegraphics[width=7cm]{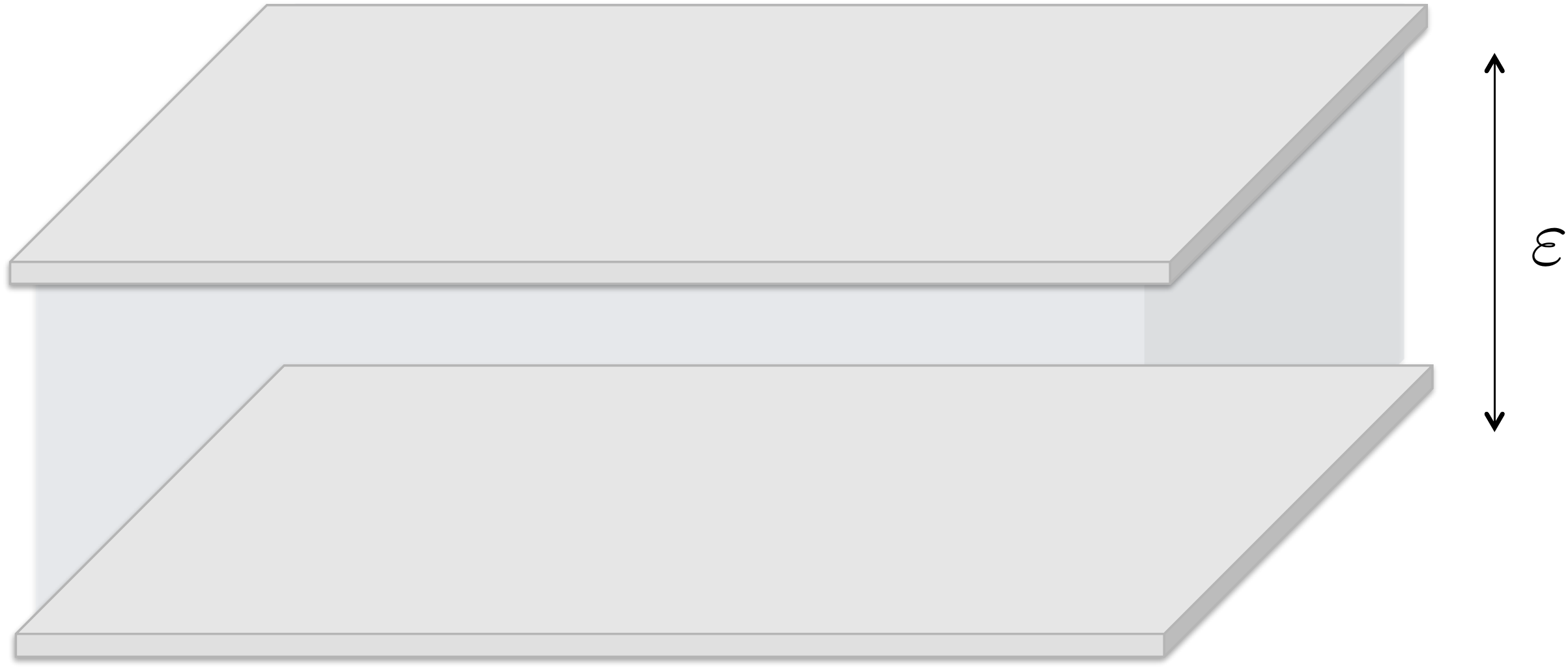}
\end{center}
\caption{View of the domain $\Omega_{\varepsilon}$ (left) and $Q_\ep$ (right).}
\label{fig:domain}
\end{figure}

\noindent As usual when we deal with thin domains, we will use the dilatation in the variable $x_3$ given by
\begin{equation}\label{dilatacion}
z_3={x_3\over \ep}.
\end{equation}
Then, we define the rescaled porous media  $\widetilde \Omega_\ep$ by (see Figure \ref{fig:domain2})
\begin{equation}\label{Omega_tilde}
\widetilde\Omega_\ep=\left\{(x',z_3)\in \mathbb{R}^3\,:\, (x',\ep z_3)\in \Omega_\ep\right\}.
\end{equation}
We also introduce the rescaled sets $\widetilde Y_{k,a_\ep}$ by  (see Figure \ref{fig:domain2})
$$\widetilde Y_{{k},a_\ep}=\left\{(x',z_3)\in\mathbb{R}^3\,:\, (x',\ep z_3)\in Y_{k,a_\varepsilon}\right\},$$
and, in the same way, we define the rescaled fluid part $\widetilde Y_{f_{k},a_\ep}$, the rescaled solid  part $\widetilde T_{k,a_\varepsilon}$ of $\widetilde Y_{{k},a_\ep}$ and the union of rescaled obstacles $\widetilde T_\ep$.
Finally, by $\Omega$ we denote the domain with fixed height without microstructure
$$
\Omega=\omega\times (0,1). 
$$
 \begin{figure}[h!]
\begin{center}
\includegraphics[width=7cm]{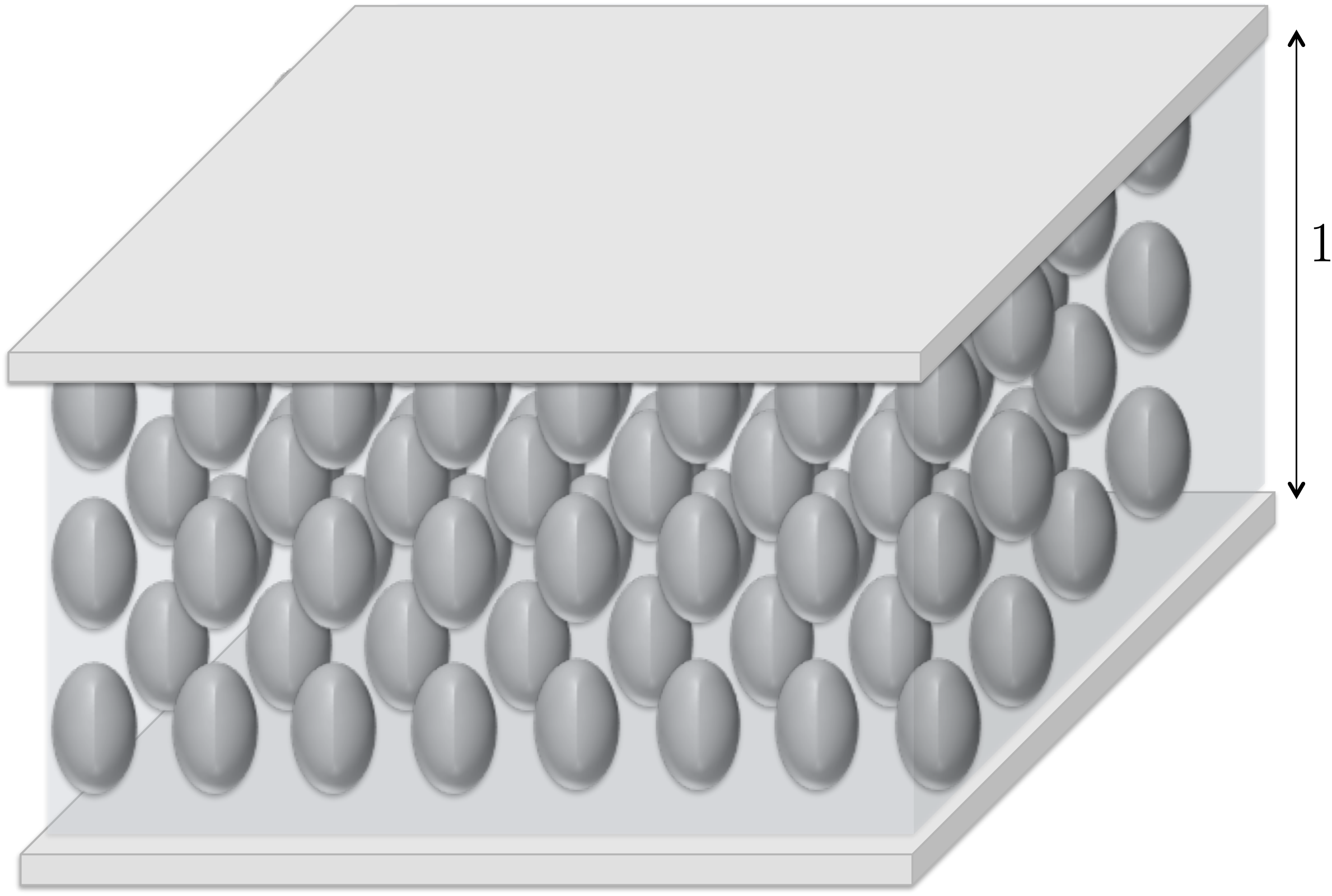}
\hspace{3cm}
\raisebox{.6\height}{\includegraphics[width=3cm]{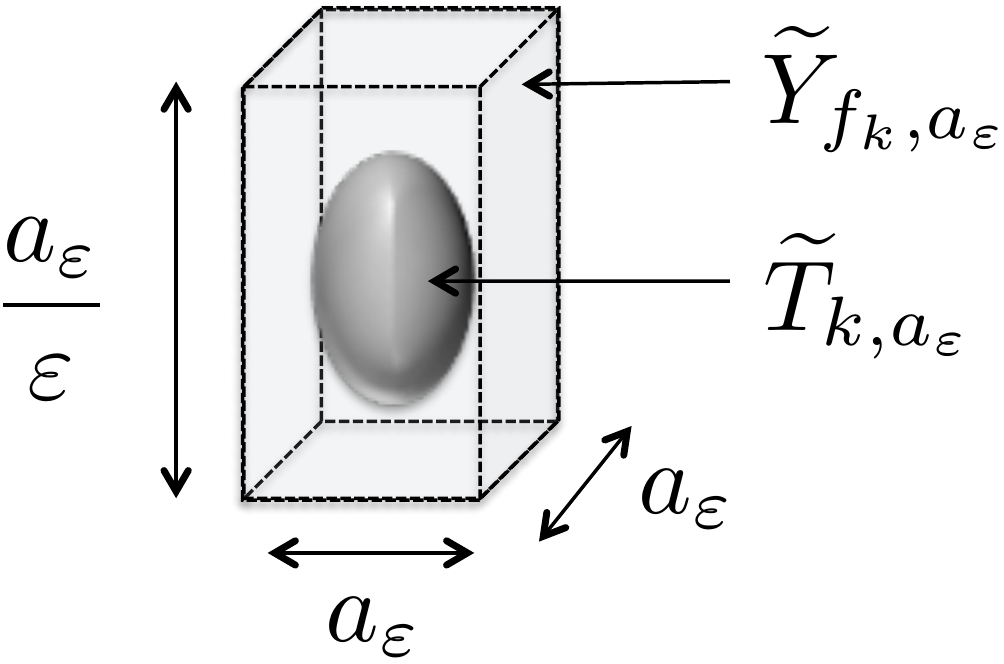}}
\end{center}
\caption{View of the rescaled domain $\widetilde \Omega_{\varepsilon}$ (left) and the rescaled cell $\widetilde Y_{k,a_\ep}$ (right).}
\label{fig:domain2}
\end{figure}

\section{Some notation}\label{sec:notation} Along this paper, the points $x\in\mathbb{R}^3$ will be decomposed as $x=(x',x_3)$ with $x'\in\mathbb{R}^2$, $x_3\in\mathbb{R}$. We also use the notation $x'$ to denote a generic vector of $\mathbb{R}^2$.

In order to apply the version of the unfolding method, we need the following notation: for $k\in\mathbb{Z}^3$, we define $\kappa:\mathbb{R}^3\to \mathbb{Z}^3$ by
\begin{equation}\label{kappa}
\kappa(x)=k\Longleftrightarrow x\in Y_{k,1}.
\end{equation}
Remark that $\kappa$ is well defined up to a set of zero measure in $\mathbb{R}^3$, which is given by $\cup_{k\in\mathbb{R}^3}\partial Y_{k,1}$.  Moreover, for every $\ep,\,a_\ep>0$, we have 
$$\kappa\left({x\over a_\ep}\right)=k\Longleftrightarrow x\in Y_{k,a_\ep}\quad\hbox{which is equivalent to }\quad \kappa\left({x'\over a_\ep},{\ep z_\ep\over a_\ep}\right)=k\Longleftrightarrow (x',z_3)\in \widetilde Y_{k,a_\ep}.$$
For a vectorial function $\varphi=(\varphi',\varphi_3)$ and a scalar function $\psi$, we introduce the operators $D_{\ep}$, $\nabla_{ \ep}$ and ${\rm div}_{ \ep}$ by 
\begin{eqnarray}
&\displaystyle(D_{ \ep}\varphi)_{ij}=\partial_{x_j}\varphi_i\hbox{ for }i=1,2,3,\ j=1,2,\quad (D_{ \ep}\varphi)_{i,3}=\ep^{-1}\partial_{y_3}\varphi_i\hbox{ for }i=1,2,3,\nonumber&\\
\noame
&\displaystyle\nabla_{ \ep}\psi=(\nabla_{x'}\psi,\ep^{-1}\partial_{y_3}\psi)^t,\quad {\rm div}_{ \ep}\varphi={\rm div}_{x'}\varphi'+\ep^{-1}\partial_{y_3}\varphi_3.  &\nonumber 
\end{eqnarray}
We denote by $L^2_{\rm per}(Y)$, $H^1_{\rm per}(Y)$, the functional spaces
$$\begin{array}{l}
\displaystyle L^2_{\rm per}(Y)=\Big\{\varphi\in L^2_{\rm loc}(\mathbb{R}^3)\,:\, \int_Y|\varphi|^2<+\infty,\quad v(y+k)=v(y)\quad\forall\,k\in\mathbb{Z}^3,\ \hbox{a.e. }y\in Y\Big\},\\
\noame
\displaystyle
H^1_{\rm per}(Y)=\Big\{\varphi\in H^1_{\rm loc}(\mathbb{R}^3)\cap L^2_{\rm per}(Y)\,:\,\nabla \varphi\in L^2_{\rm per}(Y)^3\Big\}.
\end{array}$$
We denote by $:$ the full contraction of two matrices, i.e. for $A=(a_{ij})_{1\leq i,j\leq 3}$ and $B=(a_{ij})_{1\leq i,j\leq 3}$, we have $A:B=\sum_{i,j=1}^3a_{ij}b_{ij}$.

The canonical basis in $\mathbb{R}^3$ is denoted by $\{e',e_3\}$ where $e'=\{e_1,e_2\}$.

Finally, we denote by $O_\ep$ a generic real sequence, which tends to zero with $\ep$ and can change from line to line, and by $C$ a generic positive constant which also can change from line to line.

\section{Statement of the problem and main result}\label{sec:statement}  We consider the following Stokes system in $\Omega_\ep$, with homogeneous boundary conditions in the boundary of the obstacles  $\partial T_{\ep}$ and  the exterior boundary $\partial Q_\ep$,
\begin{equation}\label{system_1_dimension_1}
\left\{\begin{array}{rl}
-\nu\,{\rm div} (D  u_\ep)+\nabla p_\ep=f_\ep &\quad\hbox{in}\quad\Omega_\ep,\\
\noame
{\rm div}\,u_\ep=0&\quad\hbox{in}\quad\Omega_\ep,\\
\noame
 u_\varepsilon=0 & \hbox{ on }\partial T_\varepsilon \cup\partial Q_\ep.
\end{array}\right.
\end{equation}
In the above system, velocity filed $u_\ep$ and (scalar) pressure $p_\ep$  are unknown, while $f_\ep$ represents the body force and the viscosity is denoted by $\nu$.

To study the asymptotic behavior of the solution  we will use the equivalent variational formulation of (\ref{system_1_dimension_1}) which is the following one: find $u_\ep \in H^1_0(\Omega_\ep)^3$ and $p_\ep\in L^2_0(\Omega_\ep)$ such that
\begin{equation}\label{form_var_1}
 \begin{array}{l}
\displaystyle \nu\int_{\Omega_\ep}D u_\ep:D\varphi\,dx-\int_{\Omega_\ep}p_\ep\,{\rm div}\,\varphi\,dx=\int_{\Omega_\ep}f_\ep\cdot\varphi\,dx,
\quad\forall\,\varphi  \in H^1_0(\Omega_\ep)^3.
\end{array} 
\end{equation}
Assuming  external body force $f_\ep\in L^2(\Omega_\ep)^3$, it is well known that (\ref{system_1_dimension_1})  has a unique weak solution $(u_\ep, p_\ep)\in H^1_0(\Omega_\ep)^3\times L^2_0(\Omega_\ep)$, see e.g \cite{Temam}.

Our aim is to describe the asymptotic behavior of the velocity $u_\ep$ and the pressure
$p_\ep$ of the fluid as $\ep$ tends to zero and identify homogenized models coupling the effects of the thickness and its microgeometry. For this purpose, as usual when we deal with thin domains, we use the dilatation (\ref{dilatacion}) in order to have the functions defined in the open set with fixed height $\widetilde\Omega_\ep$ defined in (\ref{Omega_tilde}). Namely, we define  $\tilde u_\ep \in H^1_0(\widetilde\Omega_\ep)^3$ and $\tilde p_\ep\in L^2_0(\widetilde\Omega_\ep)$ by 
\begin{equation}\label{unk_dilat}
\tilde u_\ep(x',z_3)=u_\ep(x', \ep z_3),\quad  \tilde p_\ep(x',z_3)=p_\ep(x', \ep z_3),\quad\hbox{a.e. }(x',z_3)\in \widetilde\Omega_\ep\,.
\end{equation}
Using the transformation (\ref{dilatacion}) the rescaled system (\ref{system_1_dimension_1}) can be rewritten as
\begin{equation}\label{system_2}
\left\{
\begin{array}{rl}
-\nu\,{\rm div}_{\ep}(D_{\ep} \tilde u_\ep)+\nabla_{\ep} \tilde p_\ep= \tilde f_\ep&\quad\hbox{in}\quad\widetilde\Omega_\ep,\\
\noame
{\rm div}_{\ep}\tilde u_\ep=0&\quad\hbox{in}\quad\widetilde \Omega_\ep,\\
\noame
\tilde u_\ep=0&\quad \hbox{on}\quad\partial \widetilde T_\ep\cup \widetilde\Omega\,,
\end{array}\right.
\end{equation}
where $\tilde f_\ep$ is defined similarly as in (\ref{unk_dilat}).  Moreover, taking in (\ref{form_var_1}) as test function $\tilde\varphi(x',x_3/\ep)$ with $\tilde\varphi\in H^1_0(\widetilde\Omega_\ep)^3$, applying the change of variables (\ref{dilatacion}), the variational formulation for the rescaled system (\ref{system_2}) is then to find $\tilde u_\ep \in H^1_0(\widetilde \Omega_\ep)^3$ and $\tilde p_\ep\in L^2_0(\widetilde \Omega_\ep)$ such that
\begin{equation}\label{form_var_2}
 \begin{array}{l}
\displaystyle \nu \int_{\widetilde \Omega_\ep}D_{\ep} \tilde u_\ep:D_{\ep}\tilde\varphi\,dx'dy_3-\int_{\widetilde \Omega_\ep}\tilde p_\ep\,{\rm div}_{\ep}\tilde\varphi\,dx'dy_3=\int_{\widetilde \Omega_\ep}\tilde f_\ep\cdot\tilde\varphi\,dx'dy_3\,,\quad \forall\,\varphi \in H^1_0(\widetilde \Omega_\ep)^3.\end{array} 
\end{equation}
We point out that due to the thickness of the domain, it is usual to assume that the vertical component of the external force can be neglected. Thus, let us apply to the fluid an external body force defined by 
\begin{equation}\label{fep}
f_\ep=(f'(x'),0)\quad \hbox{  with  }f'\in L^2(\omega)^2.
\end{equation}
Our goal then becomes describing the asymptotic behavior of this new sequences $\tilde u_\ep$  and $\tilde p_\ep$ when $\ep$ tends to zero.  However, the sequence of solutions $(\tilde u_\ep, \tilde p_\ep)\in H^1_0(\widetilde\Omega_\ep)^3\times L^2_0(\widetilde\Omega_\ep)$ is not defined in a fixed domain independent of $\ep$ and $a_\ep$ but rather in a varying set $\widetilde\Omega_\ep$. Thus, in order to pass to the limit if $\ep$ tends to zero, convergences in fixed Sovolev spaces (defined in $\Omega$) are used which requires first that $(\tilde u_\ep, \tilde p_\ep)$ be extended to the whole domain $\Omega$. Then, by definition, an extension $(\tilde U_\ep, \tilde P_\ep)\in H^1_0(\Omega)^3\times L^2_0(\Omega)$ of $(\tilde u_\ep, \tilde p_\ep)$ is defined on $\Omega$ and coincides with $(\tilde u_\ep, \tilde p_\ep)$ on $\widetilde \Omega_\ep$. 

Our main result  is referred to the asymptotic behavior of the solution of (\ref{system_2}) and is given by the following theorem.

\begin{theorem}\label{mainthm}Let $(\tilde u_\ep,\tilde p_\ep)$ be the unique solution of problem (\ref{system_2}). Then the sequence of the extension $(a_\ep^{-2}\tilde U_\ep, \tilde P_\ep)$ converges weakly to $(u, p)$ in $L^2(\Omega)^3\times L^2(\Omega)$ with $u_3=0$. Moreover, defining $U(x')=\int_0^1u(x',z_3)\,dz_3$, it holds 
\begin{equation}\label{Darcy_velocity}
U'(x')=K\left(f'(x')-\nabla_{x'}p(x')\right),\quad U_3(x')=0\quad \hbox{in }\omega,
\end{equation}
and $p\in H^1(\omega)\cap L^2_0(\omega)$ is the unique solution of the $2D$ Darcy equation 
\begin{equation}\label{Darcy}
\left\{\begin{array}{rl}
\displaystyle
{\rm div}_{x'} \Big(K\left(f'(x')-\nabla_{x'}p(x')\right)\Big)=0&\hbox{ in }\omega,\\
\noame
\displaystyle
\Big(K\left(f'(x')-\nabla_{x'}p(x')\right)\Big)\cdot n=0&\hbox{ on }\partial\omega.
\end{array}\right.
\end{equation}
Here $K\in\mathbb{R}^{2\times 2}$ is a definite positive matrix with coefficients
$$
K_{ij}=\nu\int_{Y_f}Dw^i(y):Dw^j(y)\,dy,\quad i,j=1,2,
$$
where $w^i$, $i=1,2$, is  the unique solution of the local Stokes problem 
\begin{equation}\label{Local_problems}
\left\{\begin{array}{rl}
-\nu\Delta_y w^i+ \nabla_y \pi^i=e_i & \hbox{ in }Y_f,\\
\noame
{\rm div}_y w^i=0& \hbox{ in }Y_f,\\
\noame
w^i=0& \hbox{ in }T,\\
\noame
w^i\in H^1_{{\rm per}}(Y_f)^3,& \pi^i\in L^2_{\rm per}(Y_f)/\mathbb{R}.
\end{array}\right.
\end{equation}
\end{theorem}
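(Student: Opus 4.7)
My approach follows the classical three-step scheme for homogenization of Stokes in perforated domains---a priori estimates, extensions of $(\tilde u_\ep,\tilde p_\ep)$ to $\Omega$, and passage to the limit---the key novelty being a version of the unfolding operator that resolves both the horizontal cell size $a_\ep$ and the vertical thin-film size $\ep$. First I would take $\varphi=u_\ep$ in (\ref{form_var_1}), use (\ref{fep}), and invoke the Poincar\'e inequality in each $a_\ep$-cell (available because $u_\ep$ vanishes on the obstacles of size $a_\ep$ strictly contained in every cell). After applying the dilatation (\ref{dilatacion}) this yields bounds
\begin{equation*}
\|\tilde u_\ep\|_{L^2(\widetilde\Omega_\ep)^3}\leq C a_\ep^2,\qquad \|D_\ep\tilde u_\ep\|_{L^2(\widetilde\Omega_\ep)^{3\times 3}}\leq C a_\ep,
\end{equation*}
so $a_\ep^{-2}\tilde u_\ep$ is the correct object to examine. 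The velocity is extended to $\tilde U_\ep\in H^1_0(\Omega)^3$ by zero across obstacles and into $\Omega\setminus\widetilde\Omega_\ep$. For the pressure, following Tartar and Allaire, I would construct a divergence-preserving restriction operator $R_\ep\colon H^1_0(Q_\ep)^3\to H^1_0(\Omega_\ep)^3$ with quantitative bounds in $a_\ep$ and $\ep$, and then by duality produce an extension $\tilde P_\ep\in L^2_0(\Omega)$ uniformly bounded in $L^2$.

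Next, I would introduce a two-parameter unfolding operator $\mathcal{T}_\ep$ sending functions on $\widetilde\Omega_\ep$ to functions on $\Omega\times Y$: for $(x',z_3)\in\widetilde Y_{k,a_\ep}$ and $y\in Y$, $\mathcal{T}_\ep(\tilde\varphi)(x',z_3,y)$ is $\tilde\varphi$ evaluated at the preimage of $y$ under the affine isomorphism $\widetilde Y_{k,a_\ep}\to Y$, with $k$ read off from (\ref{kappa}). The standard unfolding identities---norm preservation up to $O_\ep$ errors and the commutation $\mathcal{T}_\ep(D_\ep\tilde\varphi)=a_\ep^{-1}D_y\mathcal{T}_\ep(\tilde\varphi)$---then translate the bounds above into a uniform bound for $\mathcal{T}_\ep(a_\ep^{-2}\tilde u_\ep)$ in $L^2(\Omega;H^1(Y_f)^3)$. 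Extracting a weak limit $\hat u(x',z_3,y)$, the vanishing on $\partial\widetilde T_\ep$ gives $\hat u=0$ on $\Omega\times\partial T$ and $\hat u(x',z_3,\cdot)\in H^1_{\rm per}(Y_f)^3$, and passing to the limit in ${\rm div}_\ep\tilde u_\ep=0$ together with the vanishing on the top and bottom plates forces ${\rm div}_y\hat u=0$ and $\hat u_3\equiv 0$. The macroscopic incompressibility then yields ${\rm div}_{x'}U=0$ in $\omega$ with $U\cdot n=0$ on $\partial\omega$, where $U(x')=\int_0^1\!\int_Y\hat u(x',z_3,y)\,dy\,dz_3$. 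Testing the pressure against compactly supported $Y_f$-periodic fields (and exploiting the $a_\ep^{-1}$ pressure-gradient scaling) shows that the weak limit $p$ of $\tilde P_\ep$ is independent of $y$ and of $z_3$, so $p=p(x')\in H^1(\omega)\cap L^2_0(\omega)$.

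To identify the limit problem I would insert oscillating test functions $\varphi_\ep(x)=a_\ep^2\phi(x')w(x/a_\ep)$ with $\phi\in\cD(\omega)$ and $w\in H^1_{\rm per}(Y_f)^3$ vanishing on $T$ into the unfolded form of (\ref{form_var_2}), and pass to the limit to obtain
\begin{equation*}
\nu\!\int_\Omega\!\!\int_{Y_f}\!D_y\hat u:D_y w\,dy\,dx=\int_\Omega\!\!\int_{Y_f}\!\bigl(f'(x')-\nabla_{x'}p(x')\bigr)\cdot w'\,dy\,dx.
\end{equation*}
By linearity this decouples as $\hat u'(x',y)=\sum_{i=1}^2\bigl(f_i(x')-\partial_{x_i}p(x')\bigr)w^i(y)$ with $w^i$ solving (\ref{Local_problems}); averaging in $y$ (and integrating trivially in $z_3$) yields (\ref{Darcy_velocity}), and combining with the macroscopic incompressibility condition produces (\ref{Darcy}). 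Positive definiteness of $K$ is immediate from the variational characterization of $w^i$. The main obstacle I anticipate is the pressure extension: Tartar's restriction operator must be rebuilt cell by cell in the thin rescaled geometry with explicit dependence on both $a_\ep$ and $\ep$, and it is precisely here that the hypothesis (\ref{parameters}) enters decisively, ensuring that vertical boundary-layer contributions from the very thin rescaled cells $\widetilde Y_{k,a_\ep}$ remain asymptotically negligible; the rest is a careful but standard adaptation of the Allaire--Cioranescu--Damlamian framework.
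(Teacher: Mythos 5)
Your overall strategy is the same as the paper's: Poincar\'e in each $a_\ep$-cell to get $\|\tilde u_\ep\|\lesssim a_\ep^2$ and $\|D_\ep\tilde u_\ep\|\lesssim a_\ep$, extension of $u_\ep$ by zero, a Tartar-type restriction operator with $(a_\ep,\ep)$-tracked constants to extend the pressure by duality, a two-parameter unfolding operator mapping to $\Omega\times Y$, and passage to the limit by oscillating test functions to arrive at the cell problems and the Darcy system. So on structure you are aligned.

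There is, however, one substantive error. You assert that the divergence constraint together with vanishing on $z_3\in\{0,1\}$ forces $\hat u_3\equiv 0$. That is not what holds: only the macroscopic third component vanishes, $u_3=0$, equivalently $\int_{Y_f}\hat U_3(x',y)\,dy=0$ (in the paper this follows from $\int_{Y_f}w^i_3\,dy=\int_{Y_f}w^3_i\,dy=0$ using symmetry of $K$ and $w^3\equiv 0$). The microscopic vertical component $\hat u_3(x',z_3,y)$ is generically nonzero on $Y_f$, and this is precisely why the local Stokes problem (\ref{Local_problems}) is a genuinely three-dimensional problem in $Y_f$ with $w^i\in H^1_{\rm per}(Y_f)^3$ --- this is the distinguishing feature the paper emphasizes over the cylinder geometry of Fabricius et al., where the cell problem reduces to 2D. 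If one really had $\hat u_3\equiv 0$, the cell problem would collapse to 2D and you would get a different permeability tensor. Your write-up happens to still quote (\ref{Local_problems}) correctly at the end, but the stated constraint $\hat u_3\equiv 0$ contradicts that cell problem and should be replaced by the weaker and correct average condition $\int_{Y_f}\hat U_3\,dy=0$.

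A second, smaller omission: to identify $p$ in the Darcy law you need strong $L^2$ convergence of the pressure extension $\tilde P_\ep$, not merely weak convergence; the paper establishes this with a Tartar-type argument (testing $\nabla_\ep\tilde P_\ep$ against the restriction of weakly convergent $\sigma_\ep$ and invoking Ne\v{c}as/Rellich). Your plan stops at weak convergence of $\tilde P_\ep$. Relatedly, in the identification step the paper uses test functions of the anisotropic form $\varphi_\ep=(\varphi',\ep\varphi_3)(x',z_3,x'/a_\ep,\ep z_3/a_\ep)$ --- the extra factor $\ep$ on the third component is what makes the $z_3$-derivative of the pressure term disappear in the limit; your choice $\varphi_\ep=a_\ep^2\phi(x')w(x/a_\ep)$ does not build in this anisotropy and would need adjustment.
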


%
%
%
%
\begin{remark}As stated in the Introduction, the derivation of Darcy's law flows in thin porous media has been studied in \cite{Anguiano_MJOM, Fabricius} where the microstructure is a periodic array of vertical cylinders of size and period $a_\ep$ and thickness $\ep$. As a result,  three different regimes depending on the relation of parameters $\ep$ and $a_\ep$ are obtained.  The relation considered in this paper, $a_\ep\ll \ep$, can be framed in the regime called {\it homogeneously thin porous media}. Although the current study and the previous ones allow to derive Darcy's law (\ref{Darcy}) with permeability tensor $K$ obtained by solving local problems, the main difference between them lies in the local problems (\ref{Local_problems}), because the particular structure of the cylinders in the case of homogeneously thin porous media allows local problems to be reduced to problems defined in a 2D reference cell while they are defined in 3D in this case.
\end{remark}

\section{Proofs}\label{sec:proofs}
In this section we provide the proof of the main result (Theorem \ref{mainthm}). To to this,  we define of  the extension of the solution and we establish  some {\it a priori} estimates  in Subsection \ref{sec:estimates}. In Subsection \ref{sec:unfolding} we  introduce the version of the 
unfolding method depending on both parameters $\ep$ and $a_\ep$.   A compactness result, which is the main key when we will pass to the limit later, is addressed
in Subsection \ref{sec:compactness}. Finally,  the proof of the Theorem \ref{mainthm}  is given in Subsection \ref{sec:mainthm}.

\subsection{{\it A priori} estimates}\label{sec:estimates}
In this subsection, we establish sharp {\it a priori} estimates of the dilated solution in $\widetilde \Omega_\ep$. To do this, we first need  the  Poincar\'e inequality in thin porous media (for the classical version see  \cite{Tartar}).
\begin{lemma} \label{Lemma_Poincare}There exists a positive constant $C$, independent of $\ep$, such that
\begin{equation}\label{Poincare1}
\|\varphi\|_{L^2(\Omega_\ep)^3}\leq Ca_\ep\|D \varphi\|_{L^2(\Omega_\ep)^{3\times 3}},\quad \forall\,\varphi\in H^1_0(\Omega_\ep)^3.
\end{equation}
Moreover, considering $\tilde \varphi\in H^1_0(\widetilde\Omega_\ep)^3$ such that  $\tilde \varphi(x', z_3)=\varphi(x',\ep z_3)$, then it holds
\begin{equation}\label{Poincare}
\|\tilde \varphi\|_{L^2(\widetilde\Omega_\ep)^3}\leq Ca_\ep\|D_\ep \tilde \varphi\|_{L^2(\widetilde \Omega_\ep)^{3\times 3}}.
\end{equation}
\end{lemma}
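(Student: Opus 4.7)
The plan is to localise a standard Poincaré inequality on the reference cell $Y_f=Y\setminus\overline T$ and then cover $\Omega_\ep$ cell by cell. First I would check that every $\varphi\in H^1(Y_f)$ with vanishing trace on $\partial T$ extends by $0$ across $\partial T$ to a function $\bar\varphi\in H^1(Y)$ that vanishes on the set $T$, which has positive measure. A Poincaré--Friedrichs argument on $Y$ then yields a constant $C_0>0$, depending only on $(Y,T)$, such that
\[
\|\varphi\|_{L^2(Y_f)}\leq C_0\,\|\nabla\varphi\|_{L^2(Y_f)}.
\]
A rescaling $y=a_\ep^{-1}(x-a_\ep k)$ transports this estimate to each $a_\ep$-cell: for every $k\in\mathcal{K}_\ep$ and every $\varphi\in H^1(Y_{f_k,a_\ep})$ vanishing on $\partial T_{k,a_\ep}$,
\[
\|\varphi\|_{L^2(Y_{f_k,a_\ep})}\leq C_0\,a_\ep\,\|\nabla\varphi\|_{L^2(Y_{f_k,a_\ep})}.
\]

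Given $\varphi\in H^1_0(\Omega_\ep)^3$, the vanishing trace on $\partial Q_\ep\cup\partial T_\ep$ allows me to extend $\varphi$ by $0$ to $\mathbb{R}^3\setminus\Omega_\ep$, producing a function in $H^1(\mathbb{R}^3)^3$ whose weak gradient vanishes outside $\Omega_\ep$. Since this extension vanishes on each obstacle $\overline T_{k,a_\ep}$ with $k\in\mathcal{K}_\ep$, the rescaled cell estimate applies componentwise; summing over the family $\{Y_{k,a_\ep}\}_{k\in\mathcal{K}_\ep}$, which has disjoint interiors and covers $\Omega_\ep$, yields
\[
\|\varphi\|_{L^2(\Omega_\ep)^3}^2\leq\sum_{k\in\mathcal{K}_\ep}\|\varphi\|_{L^2(Y_{f_k,a_\ep})^3}^2\leq C_0^2\,a_\ep^2\sum_{k\in\mathcal{K}_\ep}\|\nabla\varphi\|_{L^2(Y_{f_k,a_\ep})^{3\times 3}}^2=C_0^2\,a_\ep^2\,\|D\varphi\|_{L^2(\Omega_\ep)^{3\times 3}}^2,
\]
where the last equality uses that the extended gradient is zero outside $\Omega_\ep$. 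This delivers (\ref{Poincare1}).

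The rescaled inequality (\ref{Poincare}) then follows by the substitution $z_3=x_3/\ep$: the chain rule gives $(D_\ep\tilde\varphi)(x',z_3)=(D\varphi)(x',\ep z_3)$, while the Jacobian $dx_3=\ep\,dz_3$ produces the same factor $\ep^{-1/2}$ in both $\|\tilde\varphi\|_{L^2(\widetilde\Omega_\ep)}$ and $\|D_\ep\tilde\varphi\|_{L^2(\widetilde\Omega_\ep)}$, so (\ref{Poincare1}) transports without any change of constant. The step I expect to require the most care is the reference-cell inequality itself: one must justify that the zero-extension across $\partial T$ really lies in $H^1(Y)$ (relying on the smoothness of $\partial T$ and the vanishing trace) and that the Poincaré constant on $Y$ over functions vanishing on the fixed positive-measure set $T$ is finite and universal. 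The boundary cells of $\mathcal{K}_\ep$ are harmless: the zero-extended $\varphi$ satisfies the cell estimate on the whole of $Y_{f_k,a_\ep}$ regardless of whether $Y_{k,a_\ep}\subset Q_\ep$.
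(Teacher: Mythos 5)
Your proposal is correct and follows essentially the same route as the paper: a Poincar\'e--Friedrichs inequality on the reference cell $Y_f$, a rescaling to each $a_\ep$-cell producing the factor $a_\ep$, a sum over $k\in\mathcal{K}_\ep$, and the dilatation $z_3=x_3/\ep$ to transfer the estimate to $\widetilde\Omega_\ep$. You are slightly more explicit than the paper about two points it treats tersely -- the justification that the zero-extension across $\partial T$ lies in $H^1(Y)$, and the treatment of boundary cells via the global zero-extension outside $\Omega_\ep$ -- but these are refinements of the same argument, not a different one.
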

\begin{proof} We observe that $\Omega_\ep$ can be divided in small cubes of lateral and vertical length $a_\ep$. We consider the periodic cell $Y_f$  and  have a Friedrichs inequality    
\begin{equation}\label{proof_gaffney}
\int_{Y_f}|\varphi|^2\,dz\leq C\int_{Y_f}|D\varphi|^2\,dz\,,
\end{equation}
for every $\varphi\in H^1(Y_f)^3$ such that $\varphi =0$ on $\partial T$, where where the constant $C$ depends only on $Y_f$. Then, for every $k\in \mathbb{R}^3$, by the change of variable
\begin{equation}\label{change}
\begin{array}{l}
\displaystyle k+z={x\over a_\ep},\quad dz={dx\over a_\ep^3},\quad  \partial_{z}=a_\ep\partial_{x},\end{array}
\end{equation}
we rescale (\ref{proof_gaffney}) from $Y_f$ to $Y_{f_k,a_\ep}$.  This yields that, for every function $\varphi(x)\in H^1(Y_{f_k,a_\ep})^3$, one has
$$
\int_{Y_{f_k,a_\ep}}|\varphi|^2\,dx\leq C\int_{Y_{f_k,a_\ep}}|D\varphi|^2\,dx\,,
$$
with the same constant $C$ as in (\ref{proof_gaffney}).  Summing previous inequality for every $k\in \mathcal{K}_\ep$ and gives (\ref{Poincare1}). 

In fact, we must consider separately the periods containing a portion of $\partial Q_\ep$, but they yield at a distance $O(a_\ep)$
of $\partial Q_\ep$, where $\varphi$ is zero, and then the corresponding inequality is immediately obtained.

Finally, by taking in (\ref{Poincare1}) the change of variables (\ref{dilatacion}), we get (\ref{Poincare}).
\end{proof}

We give {\it a priori} estimates for velocity  $\tilde u_\ep$ in $\widetilde \Omega_\ep$.
\begin{lemma} \label{Estimates_lemma} There exists a positive constant $C$, independent of $\ep$, such that 
\begin{equation}\label{estimates_u_tilde}
\|\tilde u_\ep\|_{L^2(\widetilde \Omega_\ep)^{3}}\leq Ca_\ep^2 ,\quad \|D_\ep \tilde u_\ep\|_{L^2(\widetilde \Omega_\ep)^{3\times 3}}\leq Ca_\ep\,.
\end{equation}
\end{lemma}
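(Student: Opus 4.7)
The plan is to extract both estimates from the single energy identity obtained by testing the variational formulation with the solution itself, and then to convert this energy bound into the two separate estimates via the Poincaré inequality of Lemma \ref{Lemma_Poincare}.

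First, I would choose $\tilde\varphi = \tilde u_\ep$ in the rescaled variational formulation (\ref{form_var_2}). Since $\mathrm{div}_\ep \tilde u_\ep = 0$ in $\widetilde\Omega_\ep$ and $\tilde u_\ep$ vanishes on $\partial\widetilde\Omega_\ep$, the pressure term drops out and one obtains the identity
$$\nu\int_{\widetilde\Omega_\ep} |D_\ep \tilde u_\ep|^2\, dx'dz_3 = \int_{\widetilde\Omega_\ep} \tilde f_\ep\cdot \tilde u_\ep\, dx'dz_3.$$
By the Cauchy--Schwarz inequality, the right-hand side is bounded by $\|\tilde f_\ep\|_{L^2(\widetilde\Omega_\ep)^3}\|\tilde u_\ep\|_{L^2(\widetilde\Omega_\ep)^3}$. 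Since the external force has the form (\ref{fep}) with $f'\in L^2(\omega)^2$ independent of $\ep$ and the rescaled domain $\widetilde\Omega_\ep$ is contained in $\Omega = \omega\times(0,1)$, the quantity $\|\tilde f_\ep\|_{L^2(\widetilde\Omega_\ep)^3}$ is bounded uniformly by $\|f'\|_{L^2(\omega)^2}$. This gives the intermediate bound
$$\nu\|D_\ep \tilde u_\ep\|_{L^2(\widetilde\Omega_\ep)^{3\times 3}}^2 \le C\,\|\tilde u_\ep\|_{L^2(\widetilde\Omega_\ep)^3}.$$

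The second step is to inject the Poincaré estimate (\ref{Poincare}) of Lemma \ref{Lemma_Poincare}, namely $\|\tilde u_\ep\|_{L^2(\widetilde\Omega_\ep)^3} \le C a_\ep \|D_\ep \tilde u_\ep\|_{L^2(\widetilde\Omega_\ep)^{3\times 3}}$, into the right-hand side above. Dividing by the common factor yields the gradient estimate $\|D_\ep \tilde u_\ep\|_{L^2(\widetilde\Omega_\ep)^{3\times 3}} \le C a_\ep$, and re-applying (\ref{Poincare}) to this gradient bound then produces the velocity estimate $\|\tilde u_\ep\|_{L^2(\widetilde\Omega_\ep)^3} \le C a_\ep^2$, as claimed.

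There is essentially no obstacle here: the argument is the standard energy method combined with a scaling-aware Poincaré inequality, and the hard work has already been done in Lemma \ref{Lemma_Poincare}, where the small constant $a_\ep$ (rather than $\mathrm{diam}(\Omega_\ep)$) was extracted from the Friedrichs inequality on the cell $Y_f$ rescaled to $Y_{f_k,a_\ep}$. The only point requiring a brief check is the uniform-in-$\ep$ bound on $\|\tilde f_\ep\|_{L^2(\widetilde\Omega_\ep)^3}$, which is immediate because after dilation the vertical variable ranges in $(0,1)$ and $f'$ does not depend on $z_3$.
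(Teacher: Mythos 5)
Your proposal is correct and follows essentially the same route as the paper: test the rescaled variational formulation with $\tilde u_\ep$ so the pressure term vanishes by incompressibility, apply Cauchy--Schwarz and the uniform bound on $\tilde f_\ep$, then feed in the Poincaré estimate (\ref{Poincare}) twice to obtain first the gradient bound and then the velocity bound. No meaningful differences from the paper's argument.
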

\begin{proof} Considering $\tilde u_\ep$ as test function in (\ref{form_var_2}) and apply Cauchy-Schwarz's inequality to obtain
$$\begin{array}{l}
\displaystyle \nu\|D_\ep \tilde u_\ep\|_{L^2(\widetilde \Omega_\ep)^{3\times 3}}^2
 \displaystyle \leq \|\tilde f_\ep\|_{L^2(\widetilde \Omega_\ep)^3}
\|\tilde u_\ep\|_{L^2(\widetilde \Omega_\ep)^3}.
\end{array}$$
Taking into account the assumption on $f_\ep$, which in particular does not depend on $x_3$, we have
$$\begin{array}{l}
\displaystyle \|D_\ep \tilde u_\ep\|_{L^2(\widetilde \Omega_\ep)^{3\times 3}}^2
 \displaystyle \leq  C \|\tilde u_\ep\|_{L^2(\widetilde \Omega_\ep)^3},
\end{array}$$
and from Poincar\'e's inequality (\ref{Poincare}), we get
$$\begin{array}{l}
\displaystyle \|D_\ep \tilde u_\ep\|_{L^2(\widetilde\Omega_\ep)^{3\times 3}}^2
 \displaystyle \leq C a_\ep\|D_\ep \tilde u_\ep\|_{L^2(\widetilde\Omega_\ep)^{3\times 3}},
\end{array}$$
which proves the second estimate in (\ref{estimates_u_tilde}). This and again Poincar\'e's inequality (\ref{Poincare}) give the first one.\end{proof}

\paragraph{The extension of $(\tilde u_\ep, \tilde p_\ep)$ to the whole domain $\Omega$.}
We extend the velocity $\tilde u_\ep$ by zero in $\Omega\setminus\widetilde \Omega_\ep$ (this is compatible with the homogeneous boundary condition on $\partial \Omega_\ep$), and   denote the extension by $\tilde U_\ep$. Obviously, estimates given in Lemma \ref{Estimates_lemma} remain valid and the extension $\tilde U_\ep$ is divergence free too.

 In order to extend the pressure $\tilde p_\ep$ to the whole domain $\Omega$,  we recall an important result from \cite{Tartar} which  is generalizated to thin domains  in \cite{Anguiano1}  and is concerned with the extension of the pressure $p_\ep$ to the whole domain $Q_\ep$.  Thus,  we first use a restriction operator $R^\ep$ from $H^1_0(Q_\ep)^3$ into $H^1_0(\Omega_\ep)^3$ which is introduced in  Lemma 4.5 in  \cite{Anguiano1} as $R^\ep_2$, next  we extend the gradient of the pressure by duality in $H^{-1}(Q_\ep)^3$ and finally by means of the dilatation we extend $\tilde p_\ep$ to $\Omega$. 
 
\begin{lemma} \label{restriction_operator}
There exists  a (restriction) operator $R^\ep$ acting from $H^1_0(Q_\ep)^3$ into $H^1_0(\Omega_\ep)^3$ such that
\begin{enumerate}
\item $R^\ep \varphi=\varphi$, if $\varphi \in H^1_0(\Omega_\ep)^3$ (elements of $H^1_0(\Omega_\ep)$ are extended by $0$ to $Q_\ep$).
\item ${\rm div}R^\ep \varphi=0\hbox{  in }\Omega_\ep$, if ${\rm div}\varphi=0\hbox{  on }Q_\ep$.
\item For every $\varphi\in H^1_0(Q_\ep)^3$, there exists a positive constant $C$, independent of $\varphi$ and $\ep$, such that
\begin{equation}\label{estim_restricted}
\begin{array}{l}
\|R^\ep \varphi\|_{L^2(\Omega_\ep)^{3}}+ a_\ep\|D R^\ep \varphi\|_{L^2(\Omega_\ep)^{3\times 3}} \leq C\left(\|\varphi\|_{L^2(Q_\ep)}+a_\ep \|D \varphi\|_{L^2(Q_\ep)^{3\times 3}}\right)\,.
\end{array}
\end{equation}
\end{enumerate}
\end{lemma}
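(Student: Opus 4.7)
The construction is a rescaling and assembly of a local restriction operator $R_Y$ on the reference cell $Y$, in the spirit of the classical Tartar construction of \cite{Tartar} and its thin-domain refinement in \cite{Anguiano1}. I would (i) build $R_Y:H^1(Y)^3\to H^1(Y_f)^3$ with local analogues of properties 1--3, (ii) rescale it cell by cell in $\Omega_\ep$, (iii) handle cells that touch $\partial Q_\ep$, and (iv) track the $a_\ep$-scaling to recover (\ref{estim_restricted}).

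For step (i), given $\varphi\in H^1(Y)^3$, I would set $R_Y\varphi=\varphi-w$ on $Y_f$, where $w\in H^1(Y_f)^3$ is chosen to satisfy $w=\varphi$ on $\partial T$, $w=0$ on $\partial Y$, and $\mathrm{div}\,w=\mathrm{div}\,\varphi$ in $Y_f$. The compatibility condition $\int_{Y_f}\mathrm{div}\,\varphi\,dy=-\int_{\partial T}\varphi\cdot n\,d\sigma$ holds by the divergence theorem applied to $\varphi$ on $T$, so one can lift the boundary data to a $H^1(Y_f)^3$-function and correct the remaining divergence mismatch with the Bogovski\u{\i} operator on $Y_f$, obtaining a bounded linear map $\varphi\mapsto w$. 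Then $\|R_Y\varphi\|_{H^1(Y_f)}\leq C\|\varphi\|_{H^1(Y)}$; by construction $R_Y\varphi=0$ on $\partial T$, $\mathrm{div}(R_Y\varphi)=0$ whenever $\mathrm{div}\,\varphi=0$, and $R_Y\varphi=\varphi$ whenever $\varphi$ itself vanishes on $T$.

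For step (ii), for each $k\in\mathcal{K}_\ep$ with $Y_{k,a_\ep}\subset Q_\ep$, pull $\varphi$ back to $Y$ by $\varphi_k(z)=\varphi(a_\ep(k+z))$, apply $R_Y$, and push the result forward to $Y_{f_k,a_\ep}$. Extending by $0$ inside the obstacles $T_{k,a_\ep}$ yields a function in $H^1_0(\Omega_\ep)^3$ because $R_Y\varphi_k$ agrees with $\varphi_k$ on $\partial Y$, so neighboring cells match across their common faces and the resulting patchwork belongs to $H^1(\Omega_\ep)$. Property 1 is then immediate, since for $\varphi\in H^1_0(\Omega_\ep)^3$ extended by $0$ the pullback $\varphi_k$ vanishes on $T$, so $R_Y\varphi_k=\varphi_k$; property 2 follows cell by cell from the local divergence preservation. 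Cells touching $\partial Q_\ep$ require no modification because $\varphi=0$ on $\partial Q_\ep$, which is exactly the boundary condition already imposed on $w$.

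For step (iv), the change of variables $x=a_\ep(k+z)$ gives $D_z\varphi_k=a_\ep D_x\varphi$ and $dz=a_\ep^{-3}dx$, so the cell estimate $\|R_Y\varphi_k\|^2_{L^2(Y_f)}+\|DR_Y\varphi_k\|^2_{L^2(Y_f)}\leq C(\|\varphi_k\|^2_{L^2(Y)}+\|D\varphi_k\|^2_{L^2(Y)})$ rescales to
\[
a_\ep^{-3}\|R^\ep\varphi\|^2_{L^2(Y_{f_k,a_\ep})}+a_\ep^{-1}\|DR^\ep\varphi\|^2_{L^2(Y_{f_k,a_\ep})}\leq C\bigl(a_\ep^{-3}\|\varphi\|^2_{L^2(Y_{k,a_\ep})}+a_\ep^{-1}\|D\varphi\|^2_{L^2(Y_{k,a_\ep})}\bigr).
\]
Multiplying by $a_\ep^3$ and summing over $k\in\mathcal{K}_\ep$ produces (\ref{estim_restricted}). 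The main obstacle is step (i): building $R_Y$ so that it is linear, bounded into $H^1(Y_f)^3$, and preserves divergence exactly. This is the content of Tartar's classical construction, and since the thin-domain geometry enters only through the uniform scaling factor $a_\ep$ and not through the reference cell $Y$, that construction transfers without change; the remaining work is the bookkeeping of scaling powers carried out in step (iv).
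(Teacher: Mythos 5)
First, note that the paper does not actually prove this lemma: it is imported wholesale from Lemma~4.5 of \cite{Anguiano1} (as $R^\ep_2$), which in turn adapts the classical Tartar construction to thin perforated domains. So there is no internal proof to compare against; what you have done is attempt a blind reconstruction of that cited construction. Your overall architecture (local restriction on $Y$, patch cell by cell, rescale, sum) is the right one, and your scaling bookkeeping in step (iv) is correct and does recover \eqref{estim_restricted}.

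There is, however, a genuine gap in step (i). You require $w$ to satisfy $w=\varphi$ on $\partial T$, $w=0$ on $\partial Y$, and $\mathrm{div}\,w=\mathrm{div}\,\varphi$ in $Y_f$, and you assert that the needed compatibility $\int_{Y_f}\mathrm{div}\,\varphi\,dy=-\int_{\partial T}\varphi\cdot n\,d\sigma$ holds by the divergence theorem on $T$. It does not: the divergence theorem on $T$ gives $\int_T\mathrm{div}\,\varphi\,dy=\int_{\partial T}\varphi\cdot n\,d\sigma$ (with $n$ the outer normal to $T$), and the actual compatibility condition for the problem you posed for $w$ reads $\int_{Y_f}\mathrm{div}\,\varphi\,dy=-\int_T\mathrm{div}\,\varphi\,dy$, i.e.\ $\int_Y\mathrm{div}\,\varphi\,dy=0$. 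That is false for a generic $\varphi\in H^1(Y)^3$, so the Bogovski\u{\i} step cannot be carried out as written and the operator you describe does not exist. The fix, which is what Tartar's construction actually does, is to relax the divergence constraint to
\[
\mathrm{div}\,w=\mathrm{div}\,\varphi-\frac{1}{|Y_f|}\int_Y\mathrm{div}\,\varphi\,dy\quad\text{in }Y_f,
\]
which restores compatibility automatically, and yields $\mathrm{div}(R_Y\varphi)=\frac{1}{|Y_f|}\int_Y\mathrm{div}\,\varphi\,dy$ rather than $\mathrm{div}(R_Y\varphi)=\mathrm{div}\,\varphi$. In particular $R_Y$ does \emph{not} preserve the divergence exactly, contrary to what you state at the end; it only vanishes when $\mathrm{div}\,\varphi=0$, but that is precisely all that property~2 of the lemma requires (and all that the pressure-extension argument downstream uses). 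With this correction, the pullback $\varphi_k$ of a divergence-free field in $Q_\ep$ still has zero cellwise divergence, property~2 follows cell by cell, and the rest of your argument goes through.
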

Using the restriction operator $R^\ep$ given in  Lemma \ref{restriction_operator}, we   introduce $F_\ep$ in $H^{-1}(Q_\ep)^3$ in the following way
\begin{equation}\label{F}\langle F_\varepsilon, \varphi\rangle_{H^{-1}(Q_\varepsilon)^3, H^1_0(Q_\ep)^3}=\langle \nabla p_\varepsilon, R^\varepsilon \varphi\rangle_{{H^{-1}(\Omega_\varepsilon)^3, H^1_0(\Omega_\ep)^3}}\,,\quad \hbox{for any }\varphi\in H^{1}_0(Q_\varepsilon)^3\,,
\end{equation}
and calcule the right hand side of (\ref{F}) by using (\ref{form_var_1}), which  gives
\begin{equation}\label{equality_duality}
\begin{array}{l}
\displaystyle
\left\langle F_{\varepsilon},\varphi\right\rangle_{H^{-1}(Q_\varepsilon)^3, H^1_0(Q_\ep)^3}=\displaystyle
-\nu\int_{\Omega_\varepsilon}D u_\ep : D R^{\varepsilon}\varphi\,dx+ \int_{\Omega_\varepsilon} f'\cdot (R^{\varepsilon}\varphi)'\,dx \,.
\end{array}\end{equation}
Using Lemma \ref{Estimates_lemma} for fixed $\ep$, we see that it is a bounded functional on $H^1_0(Q_\ep)$ (se the proof of Lemma \ref{Estimates_extended_lemma} below), and in fact $F_\ep\in H^{-1}(Q_\ep)^3$. Moreover, ${\rm div} \varphi=0$ implies $\left\langle F_{\varepsilon},\varphi\right\rangle=0\,,$ and the DeRham theorem gives the existence of $P_\varepsilon$ in $L^{2}_0(Q_\varepsilon)$ with $F_\varepsilon=\nabla P_\varepsilon$.

Next, we get for every $\tilde \varphi\in H^1_0(\Omega)^3$ where $\tilde \varphi(x', z_3)=\varphi(x',\ep z_3)$, using the change of variables (\ref{dilatacion}), that
$$\begin{array}{rl}\displaystyle\langle \nabla_{\varepsilon}\tilde P_\varepsilon, \tilde\varphi\rangle_{H^{-1}(\Omega)^3, H^1_0(\Omega)^3}&\displaystyle
=-\int_{\Omega}\tilde P_\varepsilon\,{\rm div}_{\varepsilon}\,\tilde\varphi\,dx'dy_3
=-\varepsilon^{-1}\int_{Q_\varepsilon}P_\varepsilon\,{\rm div}\,\varphi\,dx=\varepsilon^{-1}\langle \nabla P_\varepsilon, \varphi\rangle_{H^{-1}(Q_\varepsilon)^3, H^1_0(Q_\ep)^3}\,.
\end{array}$$
Using the identification (\ref{equality_duality}) of $F_\varepsilon$, we have
$$\begin{array}{l}\displaystyle\langle \nabla_{\varepsilon}\tilde P_\varepsilon, \tilde\varphi\rangle_{H^{-1}(\Omega)^3, H^1_0(\Omega)^3}\displaystyle
=\varepsilon^{-1}\Big(-\int_{\Omega_\varepsilon}D u_\ep : D R^{\varepsilon} \varphi\,dx
+\int_{\Omega_\varepsilon} f'(x')\cdot (R^{\varepsilon} \varphi)'\,dx\Big)\,,
\end{array}$$
and applying the change of variables (\ref{dilatacion}), we obtain 
\begin{equation}\label{extension_1}
\begin{array}{l}\displaystyle\langle \nabla_{\varepsilon}\tilde P_\varepsilon, \tilde\varphi\rangle_{H^{-1}(\Omega)^3, H^1_0(\Omega)^3}
=\displaystyle- \int_{\widetilde \Omega_\varepsilon}D_{\ep} \tilde u_\ep : D_{\ep} \tilde R^{\varepsilon} \tilde \varphi\,dx'dy_3
\displaystyle +\int_{\widetilde \Omega_\varepsilon} f'(x')\cdot (\tilde R^{\varepsilon} \tilde \varphi)'\,dx'dy_3\,,
\end{array}
\end{equation}
where $\tilde R^\ep\tilde \varphi=R^\ep \varphi$ for any $\tilde \varphi \in H^1_0(\Omega)^3$.

Finally, we estimate the right-hand side of (\ref{extension_1}) and give the estimate of the extended pressure $\tilde P_\ep$. 
\begin{lemma} \label{Estimates_extended_lemma} There exists a positive constant $C$ independent of $\ep$, such that 
\begin{equation}\label{esti_P}
\|\tilde P_\ep\|_{L^2(\Omega)}\leq C\,, \quad \|\nabla_\ep \tilde P_\ep\|_{H^{-1}(\Omega)^3}\leq C.
\end{equation}
\end{lemma}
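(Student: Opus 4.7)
The plan is to start from the dual identification (\ref{extension_1}) and bound the pairing $\langle\nabla_\ep\tilde P_\ep,\tilde\varphi\rangle$ uniformly in $\ep$ over the unit ball of $H^1_0(\Omega)^3$; this yields the second inequality in (\ref{esti_P}). The $L^2(\Omega)$ bound on $\tilde P_\ep$ will then follow from the classical Necas inequality for zero-mean distributions, since $\tilde P_\ep\in L^2_0(\Omega)$ by construction.

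The first step is to rewrite the restriction estimate (\ref{estim_restricted}) in the dilated coordinates. Given $\tilde\varphi\in H^1_0(\Omega)^3$, set $\varphi(x',x_3)=\tilde\varphi(x',x_3/\ep)\in H^1_0(Q_\ep)^3$. The change of variable (\ref{dilatacion}) gives $\|\varphi\|_{L^2(Q_\ep)}^2=\ep\|\tilde\varphi\|_{L^2(\Omega)}^2$ and $\|D\varphi\|_{L^2(Q_\ep)}^2=\ep\|D_\ep\tilde\varphi\|_{L^2(\Omega)}^2$, and analogously for $R^\ep\varphi$ in $\Omega_\ep$ versus $\tilde R^\ep\tilde\varphi$ in $\widetilde\Omega_\ep$. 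Dividing (\ref{estim_restricted}) by $\sqrt{\ep}$, the dilated estimate reads
\[
\|\tilde R^\ep\tilde\varphi\|_{L^2(\widetilde\Omega_\ep)^3}+a_\ep\|D_\ep\tilde R^\ep\tilde\varphi\|_{L^2(\widetilde\Omega_\ep)^{3\times 3}}\le C\bigl(\|\tilde\varphi\|_{L^2(\Omega)^3}+a_\ep\|D_\ep\tilde\varphi\|_{L^2(\Omega)^{3\times 3}}\bigr).
\]

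Next, I apply Cauchy--Schwarz to the two terms on the right of (\ref{extension_1}) and invoke the velocity estimate $\|D_\ep\tilde u_\ep\|_{L^2(\widetilde\Omega_\ep)^{3\times 3}}\le Ca_\ep$ from Lemma~\ref{Estimates_lemma}, together with the dilated restriction bound above. The factor $a_\ep$ on the velocity side precisely cancels the $a_\ep^{-1}$ hidden in the restriction estimate, so the bilinear term is controlled by $C(\|\tilde\varphi\|_{L^2(\Omega)}+a_\ep\|D_\ep\tilde\varphi\|_{L^2(\Omega)})$; the forcing term, with $f'\in L^2(\omega)^2$ independent of $x_3$, is majorized by the same quantity. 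Using assumption (\ref{parameters}), I split $a_\ep\|D_\ep\tilde\varphi\|_{L^2(\Omega)}\le a_\ep\|D_{x'}\tilde\varphi\|_{L^2(\Omega)}+(a_\ep/\ep)\|\partial_{z_3}\tilde\varphi\|_{L^2(\Omega)}\le C\|\tilde\varphi\|_{H^1_0(\Omega)}$ for $\ep$ small, and conclude $|\langle\nabla_\ep\tilde P_\ep,\tilde\varphi\rangle|\le C\|\tilde\varphi\|_{H^1_0(\Omega)}$, which is the second inequality in (\ref{esti_P}).

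For the $L^2(\Omega)$ bound, the ordinary gradient decomposes as $\nabla\tilde P_\ep=(\nabla_{x'}\tilde P_\ep,\partial_{z_3}\tilde P_\ep)=(\nabla_{x'}\tilde P_\ep,\ep\cdot\ep^{-1}\partial_{z_3}\tilde P_\ep)$, so each component inherits a uniform $H^{-1}(\Omega)$ bound from $\nabla_\ep\tilde P_\ep$, the vertical component even picking up an additional harmless factor of $\ep$. Since $\tilde P_\ep\in L^2_0(\Omega)$ on the fixed Lipschitz domain $\Omega$, the Necas inequality $\|P\|_{L^2(\Omega)}\le C\|\nabla P\|_{H^{-1}(\Omega)^3}$ then yields $\|\tilde P_\ep\|_{L^2(\Omega)}\le C$. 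The main technical subtlety I expect is the careful $\ep$- and $a_\ep$-bookkeeping when transferring (\ref{estim_restricted}) to the dilated setting and verifying that the $a_\ep^{-1}$ built into the restriction operator is exactly absorbed by the $a_\ep$ provided by Lemma~\ref{Estimates_lemma}; the hypothesis $a_\ep\ll\ep$ is used precisely at the point where the $D_\ep$-norm of the test function must be converted into an ordinary $H^1_0(\Omega)$ norm.
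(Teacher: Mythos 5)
Your proof is correct and follows essentially the same route as the paper's: you transfer the restriction estimate (\ref{estim_restricted}) to dilated coordinates (the paper records this as the three lines in (\ref{ext_1}) and the consolidated (\ref{ext_2}); your single $\sqrt{\ep}$-division step produces an equivalent combined inequality), note that the $a_\ep$ from the velocity estimate cancels the $a_\ep^{-1}$ in the dilated restriction bound, use $a_\ep\ll\ep$ to absorb the $D_\ep$-norm of the test function into $\|\tilde\varphi\|_{H^1_0(\Omega)}$, and finish with the Ne\v{c}as inequality for the zero-mean representative. The only cosmetic difference is that you compress the three component estimates into one, and your observation that the vertical component of the true gradient even gains a factor $\ep$ is the same fact the paper uses implicitly in passing from $\|\nabla\tilde P_\ep\|_{H^{-1}}$ to $\|\nabla_\ep\tilde P_\ep\|_{H^{-1}}$.
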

\begin{proof}Applying the dilatation, we have that $\tilde R^\ep\tilde\varphi$ satisfies the following estimates
\begin{equation}\label{ext_1}\begin{array}{l}\displaystyle
\|\tilde R^\ep(\tilde\varphi)\|_{L^2(\widetilde\Omega_\ep)^3}\leq  C\left(\|\tilde\varphi\|_{L^2(\Omega)^3} 
+ a_\ep\|D_{x'}\tilde\varphi\|_{L^2(\Omega)^{3\times 2}}+{a_\ep\over \ep} \|\partial_{z_3}\tilde\varphi\|_{L^2(\Omega)^{3}}\right),

\\
\noame\displaystyle
\|D_{x'}\tilde R^\ep\tilde\varphi\|_{L^2(\widetilde\Omega_\ep)^{3\times 2}}\leq C\left({1\over a_\ep}\|\tilde\varphi\|_{L^2(\Omega)^3} 
+ \|D_{x'}\tilde\varphi\|_{L^2(\Omega)^{3\times 2}}+{1\over \ep} \|\partial_{z_3}\tilde\varphi\|_{L^2(\Omega)^{3}}\right),\\
\noame
\displaystyle
\|\partial_{y_3}\tilde R^\ep\tilde\varphi\|_{L^2(\widetilde\Omega_\ep)^{3}}\leq C\left({\ep\over a_\ep}\|\tilde\varphi\|_{L^2(\Omega)^3} 
+\ep \|D_{x'}\tilde\varphi\|_{L^2(\Omega)^{3\times 2}}+ \|\partial_{z_3}\tilde\varphi\|_{L^2(\Omega)^{3}}\right).
\end{array} 
\end{equation}
From the relation (\ref{parameters}), we have that 
\begin{equation}\label{ext_2}
\|\tilde R^\ep \tilde\varphi\|_{L^2(\widetilde\Omega_\ep)^3}\leq  C \|\tilde \varphi\|_{H^1_0(\Omega)^3},\quad \|D_\ep \tilde R^\ep\tilde \varphi\|_{L^2(\widetilde\Omega_\ep)^{3\times 3}}\leq {C\over a_\ep}\|\tilde \varphi\|_{H^1_0(\Omega)^3}.
\end{equation}

Thus,  from Cauchy-Schwarz's inequality and using estimates for $D_{\ep}\tilde u_\ep$ in (\ref{estimates_u_tilde}), assumption of $f'$ given in (\ref{fep}) and estimate of the dilated restricted operator (\ref{ext_2}),  we   obtain
$$
\begin{array}{l}
\displaystyle
\left|\int_{\widetilde\Omega_\ep}D_{\ep}\tilde u_\ep:D_{\ep}\tilde R^\ep\tilde\varphi\,dx'dy_3\right|\leq Ca_\ep\|D_{\ep}\tilde R^\ep\tilde\varphi\|_{L^2(\widetilde \Omega_\ep)^{3\times 3}}\leq C\|\tilde\varphi\|_{H^1_0(\Omega)^3},\\
\noame\displaystyle
 \left|\int_{\widetilde\Omega_\ep}f'\cdot (\tilde R^\ep \tilde\varphi)' \,dx'dy_3\right|\leq C\|\tilde R^\ep \tilde\varphi \|_{L^2(\widetilde\Omega_\ep)^3}\leq C\|\tilde\varphi\|_{H^1_0(\Omega)^3}\,,
\end{array}
$$
which together with (\ref{extension_1}) gives $$\left|\langle \nabla_{\varepsilon}\tilde P_\varepsilon, \tilde\varphi\rangle_{H^{-1}(\Omega)^3, H^1_0(\Omega)^3}\right|\leq C\|\tilde\varphi\|_{H^1_0(\Omega)^3}.$$
This implies 
$\|\nabla_{\ep}\tilde P_\ep\|_{L^2(\Omega)^3}\leq C$ and using the Ne${\breve{\rm c}}$as inequality, there exists a representative $\tilde P_\ep\in L^2_0(\Omega)$ such that
$$\|\tilde P_\ep\|_{L^2(\Omega)}\leq C\|\nabla\tilde P_\ep\|_{H^{-1}(\Omega)^3}\leq C\|\nabla_{\ep}\tilde P_\ep\|_{H^{-1}(\Omega)^3},$$
which implies (\ref{esti_P}).
\end{proof}
\subsection{Adaptation of the unfolding method}\label{sec:unfolding}
The change of variables (\ref{dilatacion}) does not provide the information we need about the behavior of of the solution in the microstructure associated to $\widetilde\Omega_\ep$. To solve this difficulty, we use an adaptation of the unfolding method (for classical versions see  \cite{arbogast, Ciora, Ciora2}) which is related with the change of variables applied in \cite{Bayada_coupling} to study the porous part in the case of modelling of a thin film passing
a thin porous media.  In a simple way, it consists of dividing the domain $\widetilde\Omega_\ep$ into cubes of lateral length $a_\ep$ and vertical length $a_\ep/\ep$. 
\begin{definition} Let $\tilde \varphi$ be in $L^2(\widetilde\Omega_\ep)$ and $\tilde \psi$ be in $L^2(\Omega)$. We define the functions $\hat \varphi_\ep\in L^2(\mathbb{R}^3\times Y_f)^3$ and $\hat \psi_\ep\in L^2(\mathbb{R}^3\times Y)^3$   by 
\begin{eqnarray}
\displaystyle\hat \varphi_\ep(x',z_3,y)= \tilde \varphi\left( a_{\varepsilon}\kappa\left(\frac{x^{\prime}}{a_{\varepsilon}}, { \ep z_3\over a_\ep } \right)e'+a_{\varepsilon}y^{\prime}, {a_{\varepsilon}\over \ep}\kappa\left(\frac{x^{\prime}}{a_{\varepsilon}}, { \ep z_3\over a_\ep   } \right)e_3+{a_{\varepsilon}\over \ep}y_3 \right),& \hbox{a.e. }(x^{\prime},z_3,y)\in  \mathbb{R}^3 \times   Y_f,&\label{def:unfolding1}\\
\noame
\displaystyle
\hat \psi_\ep(x',z_3,y)=  \tilde \psi\left( a_{\varepsilon}\kappa\left(\frac{x^{\prime}}{a_{\varepsilon}}, { \ep z_3\over a_\ep   } \right)e'+a_{\varepsilon}y^{\prime}, {a_{\varepsilon}\over \ep}\kappa\left(\frac{x^{\prime}}{a_{\varepsilon}}, { \ep z_3\over a_\ep   } \right)e_3+{a_{\varepsilon}\over \ep}y_3 \right),&  \hbox{a.e. }(x^{\prime},z_3,y)\in \mathbb{R}^3\times   Y,&\label{def:unfolding2}
\end{eqnarray}
assuming $\tilde \varphi$ (resp. $\tilde \psi$) is extended by zero outside $\widetilde\Omega_\ep$ (resp. $\Omega$), where
 the function $\kappa=(\kappa',\kappa_3)$ is defined by (\ref{kappa}).

\end{definition}
\begin{remark}\label{remarkCV}
The restrictions of $\hat \varphi_{\varepsilon}$  to $\widetilde Y_{k,a_{\varepsilon}}\times Y_f$  (resp.  $\hat \psi_\ep$ to $\widetilde Y_{k,a_{\varepsilon}}\times Y$) does not depend on $(x^{\prime},z_3)$, while as a function of $y$ it is obtained from $(\tilde \varphi, \tilde \psi)$ by using the change of variables 
\begin{equation}\label{CV}
y^{\prime}=\frac{x^{\prime}-a_{\varepsilon}k^{\prime}}{a_{\varepsilon}},\quad y_3=\frac{\ep z_3-a_{\varepsilon}k_3}{a_{\varepsilon}},
\end{equation}
which transforms $\widetilde Y_{f_k,a_{\varepsilon}}$ into $Y_f$ (resp. $\widetilde Y_{k,a_{\varepsilon}}$ into $Y$).
\end{remark}

\begin{proposition} \label{properties_um}Let $\tilde \varphi$ be in $L^2(\widetilde\Omega_\ep)$ and $\tilde \psi$ be in $L^2(\Omega)$. Then, we have
\begin{equation*}\label{relation_norms}
\begin{array}{lll}
\displaystyle
\|\hat \varphi_\ep\|_{L^2(\mathbb{R}^3\times Y_f)^3}=\|\tilde \varphi\|_{L^2(\widetilde \Omega_\ep)^3},& \|D_{y'}\hat \varphi_\ep\|_{L^2(\mathbb{R}^3\times Y_f)^{3\times 2}}=a_\ep\|D_{x'}\tilde \varphi\|_{L^2(\widetilde \Omega_\ep)^{3\times 2}},&\displaystyle \|\partial_{y_3}\hat \varphi_\ep\|_{L^2(\mathbb{R}^3\times Y_f)^3}={a_\ep\over \ep}\|\partial_{z_3}\tilde \varphi\|_{L^2(\widetilde \Omega_\ep)^3},\\
\noame
\|\hat \psi_\ep\|_{L^2(\mathbb{R}^3\times Y)^3}=\|\tilde \psi\|_{L^2(\Omega)^3}
,& 
\|D_{y'}\hat \psi_\ep\|_{L^2(\mathbb{R}^3\times Y)^{3\times 2}}=a_\ep\|D_{x'}\tilde \psi\|_{L^2(\Omega)^{3\times 2}}
,&\displaystyle 
\|\partial_{y_3}\hat \psi_\ep\|_{L^2(\mathbb{R}^3\times Y)^3}={a_\ep\over \ep}\|\partial_{z_3}\tilde \psi\|_{L^2(\Omega)^3}.
\end{array}\end{equation*}
\end{proposition}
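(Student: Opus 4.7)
The plan is to reduce both families of identities to two elementary ingredients: the piecewise-constant structure of $\hat\varphi_\ep$ in $(x',z_3)$, observed in Remark~\ref{remarkCV}, and the change of variables (\ref{CV}). For the first (norm) equality I would partition $\mathbb{R}^3=\bigcup_{k\in\mathbb{Z}^3}\widetilde Y_{k,a_\ep}$ and use the fact that on each slab $\widetilde Y_{k,a_\ep}\times Y_f$ the integrand $|\hat\varphi_\ep|^2$ depends only on $y$. This turns $\|\hat\varphi_\ep\|^2_{L^2(\mathbb{R}^3\times Y_f)}$ into $\sum_k|\widetilde Y_{k,a_\ep}|\int_{Y_f}|\hat\varphi_\ep|^2\,dy$, with cell volume $|\widetilde Y_{k,a_\ep}|=a_\ep^2\cdot(a_\ep/\ep)=a_\ep^3/\ep$. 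Applying (\ref{CV}) to the inner integral produces a Jacobian $\ep/a_\ep^3$ that cancels the cell volume exactly, and summing over $k$ (using the zero extension of $\tilde\varphi$ outside $\widetilde\Omega_\ep$) recovers $\|\tilde\varphi\|^2_{L^2(\widetilde\Omega_\ep)}$.

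For the three derivative equalities I would differentiate (\ref{def:unfolding1}) via the chain rule. On $\widetilde Y_{k,a_\ep}\times Y_f$, viewed as a function of $y$, $\hat\varphi_\ep$ is $\tilde\varphi$ composed with an affine map of scale $a_\ep$ in $y'$ and $a_\ep/\ep$ in $y_3$, so
$$\partial_{y_j}\hat\varphi_\ep(x',z_3,y)= a_\ep\,(\partial_{x_j}\tilde\varphi)\big(a_\ep k'+a_\ep y',(a_\ep/\ep)k_3+(a_\ep/\ep)y_3\big),\quad j=1,2,$$
and analogously $\partial_{y_3}\hat\varphi_\ep=(a_\ep/\ep)(\partial_{z_3}\tilde\varphi)$ evaluated at the same point. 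Thus $\partial_{y_j}\hat\varphi_\ep$ is precisely the unfolding of $a_\ep\partial_{x_j}\tilde\varphi$, and $\partial_{y_3}\hat\varphi_\ep$ is the unfolding of $(a_\ep/\ep)\partial_{z_3}\tilde\varphi$. Feeding these expressions into the norm identity already proved yields the three derivative equalities for $\hat\varphi_\ep$ at once.

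The argument for $\hat\psi_\ep$ is identical, with $Y_f$ replaced by $Y$ and $\widetilde Y_{f_k,a_\ep}$ by $\widetilde Y_{k,a_\ep}$; since no obstacle is excluded from the target cell, the bookkeeping of scales is unchanged. I do not anticipate any serious obstacle: the argument is a direct change-of-variables calculation, and the only point requiring care is the precise cancellation among the three length scales $1$, $a_\ep$ and $a_\ep/\ep$, which is exactly what generates the prefactors $1$, $a_\ep$, and $a_\ep/\ep$ appearing in the statement. Boundary cells cut by $\partial Q_\ep$ need no separate treatment because the zero-extension convention on $\tilde\varphi$ and $\tilde\psi$ annihilates their contributions automatically.
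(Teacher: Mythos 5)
Your proposal is correct and follows essentially the same route as the paper: decompose $\mathbb{R}^3$ into the cells $\widetilde Y_{k,a_\ep}$, exploit that the unfolded function is constant in $(x',z_3)$ on each cell (so the cell measure $a_\ep^3/\ep$ factors out), undo the change of variables (\ref{CV}) whose Jacobian $\ep/a_\ep^3$ cancels that measure, and obtain the derivative identities from the chain-rule factors $a_\ep$ and $a_\ep/\ep$. Your reformulation that $\partial_{y_j}\hat\varphi_\ep$ and $\partial_{y_3}\hat\varphi_\ep$ are themselves the unfoldings of $a_\ep\partial_{x_j}\tilde\varphi$ and $(a_\ep/\ep)\partial_{z_3}\tilde\varphi$, so the derivative equalities follow from the $L^2$-isometry, is a slightly cleaner packaging of the same computation the paper carries out directly.
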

\begin{proof} We will only make the proof for $\hat \varphi_\ep$. The procedure for $\hat \psi$ is similar, so we omit it. Taking into account the definition (\ref{def:unfolding1}) of $\hat{\varphi}_{\varepsilon}$, we obtain
\begin{eqnarray*}
\int_{\mathbb{R}^3\times Y_f}\left\vert D_{y^{\prime}} \hat{\varphi}_{\varepsilon}(x^{\prime},z_3,y) \right\vert^2 dx^{\prime}dz_3dy&=&\displaystyle\sum_{k\in  \mathbb{R}^3}\int_{\widetilde Y_{k,a_{\varepsilon}}}\int_{Y_f}\left\vert D_{y^{\prime}} \hat{\varphi}_{\varepsilon}(x^{\prime},z_3,y) \right\vert^2 dx^{\prime}dz_3dy\\
&=&\displaystyle\sum_{k \in \mathbb{R}^3}\int_{\widetilde Y_{k,a_{\varepsilon}}}\int_{Y_f}\left\vert D_{y^{\prime}} \tilde{\varphi} (a_{\varepsilon}k^{\prime}+a_{\varepsilon}y^{\prime},a_{\varepsilon}\ep^{-1}k_3+a_{\varepsilon}\ep^{-1}y_3
) \right\vert^2dx^{\prime}dz_3dy.
\end{eqnarray*}
We observe that $\tilde{u}_{\varepsilon}$ does not depend on $(x^{\prime},z_3)$, then we can deduce
\begin{eqnarray*}
\int_{\mathbb{R}^3\times Y_f}\left\vert D_{y^{\prime}} \hat{\varphi}_{\varepsilon}(x^{\prime},z_3,y) \right\vert^2 dx^{\prime}dz_3dy
= {a_{\varepsilon}^3\over \ep}\displaystyle\sum_{k \in \mathbb{R}^3}\int_{Y_f}\left\vert D_{y^{\prime}} \tilde{\varphi}(a_{\varepsilon}k^{\prime}+a_{\varepsilon}y^{\prime},a_{\varepsilon}\ep^{-1}k_3+a_{\varepsilon}\ep^{-1}y_3
) \right\vert^2dy.
\end{eqnarray*}
By the change of variables (\ref{CV}), we obtain
\begin{eqnarray*}
\int_{\mathbb{R}^3\times Y_f}\left\vert D_{y^{\prime}} \hat{\varphi}_{\varepsilon}(x^{\prime},z_3,y) \right\vert^2 dx^{\prime}dz_3dy
&=&a_{\varepsilon}^2
\displaystyle\sum_{k \in  \mathbb{R}^3}\int_{\widetilde Y_{f_k,a_{\varepsilon}}} \left\vert D_{x^{\prime}} \tilde{\varphi}(x^{\prime},z_3) \right\vert^2dx^{\prime}dz_3\\
&=& a_{\varepsilon}^2\int_{\widetilde \Omega_\ep}\left\vert D_{x^{\prime}} \tilde \varphi(x^{\prime},z_3) \right\vert^2dx^{\prime}dz_3.
\end{eqnarray*}
Thus, we get the property for $D_{y'}\hat \varphi_\ep$.

Similarly,  we have
\begin{eqnarray*}
\int_{\mathbb{R}^3\times Y_f}\left\vert \partial_{y_3} \hat{\varphi}_{\varepsilon}(x^{\prime},z_3,y) \right\vert^2dx^{\prime}dz_3dy= {a_{\varepsilon}^3\over \ep}\displaystyle\sum_{k \in \mathbb{R}^3}\int_{Y_f}\left\vert \partial_{y_3} \tilde{\varphi} (a_{\varepsilon}k^{\prime}+a_{\varepsilon}y^{\prime},a_{\varepsilon}\ep^{-1}k_3+a_{\varepsilon}\ep^{-1}y_3
)\right\vert^2dy.
\end{eqnarray*}
By the change of variables (\ref{CV})  we obtain
\begin{eqnarray*}
\int_{\mathbb{R}^3\times Y_f}\left\vert \partial_{y_3} \hat{\varphi}_{\varepsilon}(x^{\prime},z_3,y) \right\vert^2dx^{\prime}dz_3dy 
& 
=
&
{a_{\varepsilon}^2\over \ep^2}\displaystyle\sum_{k \in \mathbb{R}^3}\int_{\widetilde Y_{f_k,a_\ep}}\left\vert \partial_{y_3} \tilde{\varphi} (x',z_3)\right\vert^2dx'dz_3.
\\
\noame
\displaystyle
&=&{a_{\varepsilon}^2\over \ep^2} \int_{\widetilde\Omega_\ep}\left\vert \partial_{y_3} \tilde{\varphi}(x^{\prime},z_3)\right\vert^2dx^{\prime}dz_3,
\end{eqnarray*}
so the the property for $\partial_{y_3}\hat \varphi_\ep$ is proved. Finally, reasoning analogously we deduce
\begin{eqnarray*}
\int_{\mathbb{R}^3\times Y_f}\left\vert \hat{\varphi}_{\varepsilon}(x^{\prime},z_3,y)\right\vert^2dx^{\prime}dz_3dy = \int_{\widetilde\Omega_\ep}\left\vert \tilde{\varphi}(x^{\prime},z_3)\right\vert^2dx^{\prime}dz_3,
\end{eqnarray*}
and the property for $\hat \varphi_\ep$  holds. 
\end{proof}
Now, from functions $\tilde u_\ep$ and $\tilde P_\ep$  we define $\hat u_\ep$ by using (\ref{def:unfolding1}) and $\hat P_\ep$ by means of (\ref{def:unfolding2}). Below, we get the estimates for these sequences. 

\begin{lemma}\label{estimates_hat}
There exists a constant $C>0$ independent of $\ep$, such that $\hat u_\ep$ defined by (\ref{def:unfolding1}) and   $\hat P_\ep$ defined by (\ref{def:unfolding2}) satisfy  
\begin{equation}\label{estim_u_hat}
 \|\hat u_\ep\|_{L^2(\mathbb{R}^3\times Y_f)^3}\leq Ca_\ep^2,\quad 
 \|D_{y}\hat u_\ep\|_{L^2(\mathbb{R}^3\times Y_f)^{3\times 3}}\leq Ca_\ep^2,
 \end{equation}
\begin{equation}\label{estim_P_hat}
 \|\hat P_\ep\|_{L^2(\mathbb{R}^3\times Y)}\leq C.
\end{equation}
 \end{lemma}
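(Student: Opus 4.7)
The strategy is to simply combine the norm identities of Proposition \ref{properties_um} (which relate the unfolded norms to the original ones) with the a priori estimates already proven in Lemma \ref{Estimates_lemma} for the velocity and in Lemma \ref{Estimates_extended_lemma} for the pressure. There is no real obstacle here; the work is book-keeping on the powers of $a_\ep$ and $\ep$.

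First I would handle the $L^2$-norm of $\hat u_\ep$. Proposition \ref{properties_um} applied to $\tilde\varphi=\tilde u_\ep$ gives the equality $\|\hat u_\ep\|_{L^2(\mathbb{R}^3\times Y_f)^3}=\|\tilde u_\ep\|_{L^2(\widetilde\Omega_\ep)^3}$, which together with the first estimate in \eqref{estimates_u_tilde} yields the bound $\|\hat u_\ep\|_{L^2(\mathbb{R}^3\times Y_f)^3}\leq Ca_\ep^2$.

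For the gradient, I would split $D_y\hat u_\ep$ into its horizontal and vertical components. By Proposition \ref{properties_um},
\[
\|D_{y'}\hat u_\ep\|_{L^2(\mathbb{R}^3\times Y_f)^{3\times 2}}=a_\ep\|D_{x'}\tilde u_\ep\|_{L^2(\widetilde\Omega_\ep)^{3\times 2}},\qquad \|\partial_{y_3}\hat u_\ep\|_{L^2(\mathbb{R}^3\times Y_f)^{3}}=\tfrac{a_\ep}{\ep}\|\partial_{z_3}\tilde u_\ep\|_{L^2(\widetilde\Omega_\ep)^{3}}.
\]
Now the definition of $D_\ep$ gives $\|D_\ep\tilde u_\ep\|_{L^2(\widetilde\Omega_\ep)^{3\times 3}}^2=\|D_{x'}\tilde u_\ep\|_{L^2(\widetilde\Omega_\ep)^{3\times 2}}^2+\ep^{-2}\|\partial_{z_3}\tilde u_\ep\|_{L^2(\widetilde\Omega_\ep)^{3}}^2$, so the second estimate in \eqref{estimates_u_tilde} controls both $\|D_{x'}\tilde u_\ep\|_{L^2(\widetilde\Omega_\ep)^{3\times 2}}$ and $\ep^{-1}\|\partial_{z_3}\tilde u_\ep\|_{L^2(\widetilde\Omega_\ep)^{3}}$ by $Ca_\ep$. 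Multiplying each by $a_\ep$ produces the desired bound $Ca_\ep^2$ on each of the two components of $D_y\hat u_\ep$, hence \eqref{estim_u_hat}.

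Finally, for the pressure I would apply the second set of identities in Proposition \ref{properties_um} to $\tilde\psi=\tilde P_\ep$, which gives $\|\hat P_\ep\|_{L^2(\mathbb{R}^3\times Y)}=\|\tilde P_\ep\|_{L^2(\Omega)}$. The first estimate of \eqref{esti_P} in Lemma \ref{Estimates_extended_lemma} then yields \eqref{estim_P_hat}. This completes the proof; the entire argument amounts to reading off Proposition \ref{properties_um} and plugging in the already-proven a priori bounds.
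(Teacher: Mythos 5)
Your proposal is correct and follows exactly the route the paper intends: the paper's own proof is a one-line remark that the bounds ``easily follow from Proposition \ref{properties_um} and estimates given in Lemmas \ref{Estimates_lemma} and \ref{Estimates_extended_lemma}.'' Your write-up simply makes that bookkeeping explicit, correctly splitting $D_y\hat u_\ep$ into horizontal and vertical parts and matching them against $\|D_\ep\tilde u_\ep\|\le Ca_\ep$.
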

\begin{proof} Estimates (\ref{estim_u_hat}) and (\ref{estim_P_hat}) easily follow from Proposition \ref{properties_um} and estimates given in Lemmas \ref{Estimates_lemma} and \ref{Estimates_extended_lemma}.
\end{proof}
 To finish this section, we will give the variational formulation satisfied by the functions $(\hat u_\ep,\hat P_\ep)$, which will be useful in the following sections. Thus, we consider $\varphi_\ep(x',z_3)=\varphi(x',z_3,x'/a_\ep,\ep z_3/a_\ep)$ as test function in (\ref{form_var_2}) where $\varphi(x',z_3,y)\in C_c^1(\Omega;H_{{\rm per}}^1(Y)^3)$.  Taking into account the extension of the pressure 
$$\int_{\widetilde\Omega_\ep}\nabla_{\ep}\tilde p_\ep\cdot \varphi_\ep\,dx'dz_3=\int_{\Omega}\nabla_{\ep}\tilde P_\ep\cdot \varphi_\ep,\,dx'dz_3\,,$$
the variational formulation (\ref{form_var_2}) reads
\begin{equation}\label{form_var_general_1}
\begin{array}{l}\displaystyle
\int_{\widetilde\Omega_\ep}D_{\ep}\tilde u_\ep:D_{\ep}\varphi_\ep\,dx'dy_3-\int_{\Omega}
\tilde P_\ep\,{\rm div}_{\ep}\varphi_\ep\,dx'dy_3
=\int_{\widetilde\Omega_\ep}f'\cdot \varphi_\ep'\,dx'dy_3\,.
\end{array}
\end{equation}
By the change of variables given in Remark \ref{remarkCV}, we obtain
\begin{equation}\label{form_var_hat_u}
\begin{array}{l}
\displaystyle{\nu\over a_\ep^2}\int_{\Omega\times Y_f}D_{y}\hat u_\ep:D_{y}\varphi\,dx'dz_3dy
-\int_{\Omega\times Y}\hat P_\ep\,{\rm div}_{\ep}\varphi\,dx'dz_3dy-{1\over a_\ep}\int_{\Omega\times Y}\hat P_\ep\,{\rm div}_{y}\varphi\,dx'dz_3dy\\
\noame
\displaystyle
=
\int_{\Omega\times Y_f}f'\cdot \varphi'\,dx'dz_3dy+O_\ep\,,
\end{array}
\end{equation}
When $\ep$ tends to zero, we will analyze the asymptotic behavior  of sequence $(\hat u_\ep,  \hat P_\ep)$ in the next sections.
\subsection{Some compactness results}\label{sec:compactness}
In this subsection we obtain some compactness results concerning the behavior of the sequences $(\tilde U_\ep, \tilde P_\ep)$ and $(\hat u_\ep,\hat p_\ep)$.
\begin{lemma}\label{lemma_compactness}
For a subsequence of $\ep$ still denoted by $\ep$,  there exist   $u\in L^2(\Omega)^3$ and $\hat u\in L^2(\mathbb{R}^3; H^1_{{\rm per}}(Y_f)^3)$, such that
\begin{eqnarray}
&a_\ep^{-2}\tilde U_\ep \rightharpoonup (u',0)\hbox{ in }L^2(\Omega)^3,&\label{conv_vel_tilde}\\
\noame
&a_\ep^{-2}\hat u_\ep\rightharpoonup \hat u\hbox{ in }L^2(\mathbb{R}^3; H^1(Y_f)^3),\quad a_\ep^{-2}D_y\hat u_\ep\rightharpoonup D_y\hat u\hbox{ in }L^2(\mathbb{R}^3\times Y_f)^3,\label{conv_vel_gorro}&
\end{eqnarray}
with the boundary conditions $u=0$ on $z_3=\{0,1\}$ and $\hat u=0$ in $\Omega\times  T$ and in  $(\mathbb{R}^3\setminus \Omega)\times Y_f$.

Moreover, defining $U(x')=\int_0^1u(x',z_3)\,dz_3$ and  $\hat U(x',y)=\int_0^1\hat U(x',z_3,y)\,dz_3$,  it holds 
 \begin{equation}\label{relation_u_ugorro}
 U(x')= \int_{Y_f}\hat U(x',y)\,dy\quad\hbox{with}\quad \int_{Y_f}\hat U_3(x',y)\,dy=0\quad \hbox{ in  }\omega,
 \end{equation}
and  $\hat U(x',y)\in L^2(\mathbb{R}^2; H^1_{\rm per}(Y_f)^3)$  satisfies $\hat U=0$ in $\mathbb{R}^2\times T$ and in $(\mathbb{R}^2\setminus\omega)\times Y_f$ together with divergence conditions 
\begin{eqnarray}
& \displaystyle {\rm div}_{y}\,\hat U(x',y)=0\quad \hbox{in }\mathbb{R}^3\times Y_f,&\label{divyproperty}\\
\noame
&\displaystyle {\rm div}_{x'}\left(\int_{Y_f}\hat U'(x',y)\,dy\right)=0\quad \hbox{in }\mathbb{R}^2,\label{divyproperty_hat}\\
\noame
&\displaystyle \left(\int_{Y_f}\hat U'(x',y)\,dy\right)\cdot n=0\quad \hbox{on }\partial\omega,& \label{divyproperty_hat2}
\end{eqnarray}
\end{lemma}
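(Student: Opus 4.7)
The plan is to extract weak $L^2$-subsequential limits from the uniform bounds in Lemmas \ref{Estimates_lemma}, \ref{Estimates_extended_lemma} and \ref{estimates_hat}, and then to identify the null sets, the divergence conditions, and the two-scale relation $u=\int_{Y_f}\hat u\,dy$ by combining the $\ep$-divergence equation with the definitions of the unfolding operators. Most ingredients are standard; the only subtle step is the vanishing of $u_3$, which requires an $H^{-1}_{x'}$-type argument.

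First, Lemma \ref{Estimates_lemma} gives $\|a_\ep^{-2}\tilde U_\ep\|_{L^2(\Omega)^3}\leq C$, so along a subsequence $a_\ep^{-2}\tilde U_\ep \rightharpoonup u$ weakly in $L^2(\Omega)^3$; Lemma \ref{estimates_hat} likewise yields $a_\ep^{-2}\hat u_\ep \rightharpoonup \hat u$ and $a_\ep^{-2}D_y\hat u_\ep \rightharpoonup D_y\hat u$. The properties $\hat u=0$ in $\Omega\times T$ and in $(\mathbb{R}^3\setminus\Omega)\times Y_f$ follow directly from the definition (\ref{def:unfolding1}), since $\tilde u_\ep$ vanishes on the obstacles and has been extended by zero outside $\widetilde\Omega_\ep$. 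The $y$-periodicity of $\hat u$ follows from the matching of the unfolded functions across adjacent cells and passes to the weak limit in the standard way.

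The delicate step is $u_3=0$. Because the trace of $\tilde u_\ep$ on $\partial\widetilde T_\ep$ vanishes, the zero extension $\tilde U_\ep\in H^1_0(\Omega)^3$ still satisfies ${\rm div}_\ep \tilde U_\ep=0$ in $\mathcal{D}'(\Omega)$, and rewriting this identity as $\partial_{z_3}\tilde U_{\ep,3}=-\ep\,{\rm div}_{x'}\tilde U'_\ep$ together with the continuity of ${\rm div}_{x'}\colon L^2(\omega)^2\to H^{-1}(\omega)$ and the bound $\|\tilde U'_\ep\|_{L^2}\leq Ca_\ep^2$ produces
$$\|a_\ep^{-2}\partial_{z_3}\tilde U_{\ep,3}\|_{L^2((0,1);H^{-1}(\omega))}\leq C\ep.$$
Integrating in $z_3$ starting from the zero plate condition $\tilde U_{\ep,3}|_{z_3=0}=0$ yields $a_\ep^{-2}\tilde U_{\ep,3}\to 0$ strongly in $L^2((0,1);H^{-1}(\omega))$. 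Because $L^2(\Omega)\hookrightarrow L^2((0,1);H^{-1}(\omega))$, uniqueness of weak limits forces $u_3=0$, which in particular delivers the stated boundary condition on $z_3\in\{0,1\}$ for the only non-trivial component of the trace. This is the technical heart of the lemma: a direct $L^2$ estimate of $\partial_{z_3}(a_\ep^{-2}\tilde U_\ep)$ would blow up like $\ep/a_\ep$, so one must sacrifice $L^2$-regularity in $x'$ to extract the decisive factor $\ep$.

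The remaining items follow by standard manipulations. The unfolding of ${\rm div}_\ep \tilde u_\ep=0$ under the change of variables of Remark \ref{remarkCV} reads $a_\ep^{-1}{\rm div}_y\hat u_\ep=0$, so ${\rm div}_y\hat u=0$ in $\mathbb{R}^3\times Y_f$ after passing to the weak limit, and averaging in $z_3$ transfers the same property to $\hat U$. The two-scale identity $u(x',z_3)=\int_{Y_f}\hat u(x',z_3,y)\,dy$ is obtained by testing $a_\ep^{-2}\tilde U_\ep$ against a smooth $\phi(x',z_3)$, unfolding the resulting integral via (\ref{CV}), and passing to the limit; integrating in $z_3$ gives $U=\int_{Y_f}\hat U\,dy$, while $\int_{Y_f}\hat U_3\,dy=U_3=0$ follows from $u_3=0$. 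Finally, integrating ${\rm div}_\ep \tilde U_\ep=0$ over $z_3\in(0,1)$ and using the vanishing trace on the plates yields ${\rm div}_{x'}\int_0^1\tilde U'_\ep\,dz_3=0$ in $\mathcal{D}'(\mathbb{R}^2)$, valid on all of $\mathbb{R}^2$ because $\tilde U_\ep$ is extended by zero beyond $\omega$; passing to the weak limit delivers (\ref{divyproperty_hat}) and the no-flux condition (\ref{divyproperty_hat2}) simultaneously.
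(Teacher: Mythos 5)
Your proof is correct and covers all the conclusions of the lemma, but the two arguments diverge in one significant place: the vanishing of the vertical limit velocity and the $x'$-divergence/no-flux conditions on $U'$. The paper disposes of these simultaneously by citing Lemma 3.12-(ii) of \cite{Anguiano_MJOM}, a ready-made compactness result for thin domains which directly delivers the convergence $a_\ep^{-2}\tilde U_\ep\rightharpoonup(u',0)$ together with ${\rm div}_{x'}U'=0$ in $\omega$ and $U'\cdot n=0$ on $\partial\omega$; from there the paper transfers these to $\hat U$ via the two-scale identity exactly as you do. You instead inline a self-contained argument: rewriting ${\rm div}_\ep\tilde U_\ep=0$ as $\partial_{z_3}\tilde U_{\ep,3}=-\ep\,{\rm div}_{x'}\tilde U'_\ep$, exploiting the continuity of ${\rm div}_{x'}:L^2(\omega)^2\to H^{-1}(\omega)$ and the bound $\|a_\ep^{-2}\tilde U'_\ep\|_{L^2}\le C$ to obtain $\|a_\ep^{-2}\partial_{z_3}\tilde U_{\ep,3}\|_{L^2((0,1);H^{-1}(\omega))}\le C\ep$, and then integrating from the zero trace at $z_3=0$ to force $u_3=0$. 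This is the same structural idea that must underlie the cited external lemma, but making it explicit is a genuine added value: it keeps the lemma self-contained, and it isolates precisely why one must drop to $H^{-1}(\omega)$ in $x'$ rather than estimate $\partial_{z_3}(a_\ep^{-2}\tilde U_\ep)$ in $L^2$ (which would cost a factor $\ep/a_\ep\to\infty$). Your treatment of the remaining items — vanishing of $\hat u$ on obstacles and outside $\Omega$, $y$-periodicity by matching unfolded traces across adjacent cells (stated rather than written out, but standard), ${\rm div}_y\hat u=0$ from $a_\ep^{-1}{\rm div}_y\hat u_\ep=0$, the two-scale identity $U=\int_{Y_f}\hat U\,dy$ by unfolding against a smooth test function, and the no-flux condition by integrating the divergence equation in $z_3$ over $\mathbb{R}^2$ — coincides with the paper's argument.

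One small caution: the boundary condition ``$u=0$ on $z_3\in\{0,1\}$'' in the statement is delicate since $u$ is only in $L^2(\Omega)^3$; your argument establishes $u_3\equiv 0$ in $\Omega$ (so in particular on the plates), which matches the form $(u',0)$ of the weak limit, but says nothing about a trace of $u'$. The paper attributes this condition to the cited lemma without further comment, and in fact the statement as written appears to be a slight abuse of notation; neither your proof nor the paper's justifies a genuine trace statement for $u'$, and none is needed for the sequel.
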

\begin{proof} We start with the extended velocity $\tilde U_\ep$. From estimates (\ref{estimates_u_tilde}), we get
$$\|\tilde U_\ep\|_{L^2(\Omega)^3}\leq C a_\ep^2,\quad \|D_{x'}\tilde U_\ep\|_{L^2(Q_\ep)^3}\leq Ca_\ep ,\quad \|\partial_{y_3}\tilde U_\ep\|_{L^2(Q_\ep)^3}\leq C a_\ep\ep.$$

Then, applying Lema 3.12-$(ii)$ in \cite{Anguiano_MJOM}, we get the existence of $u\in L^2(\Omega)^3$ with $u_3=0$ on $z_3=\{0,1\}$, such that it holds convergence (\ref{conv_vel_tilde}), up to a subsequence, and the following divergence condition 
\begin{equation}\label{divxproperty}
{\rm div}_{x'} U'(x')=0\quad \hbox{in }\omega,\quad U'(x')\cdot n=0\quad \hbox{in }\partial\omega.
\end{equation}

Next,  taking into account that from estimates of the velocity $\hat u_\ep$ given in (\ref{estim_u_hat}) we have the existence of $\hat u\in L^2(\mathbb{R}^3; H^1_{\rm per}(Y_f)^3)$ satisfying, up to a subsequence,  convergences (\ref{conv_vel_gorro}). Taking into account that $a_\ep^{-2}\hat u_\ep$ vanishes on $\widetilde \Omega_\ep\times T$, we deduce that $\hat u$ also vanishes on  $\Omega\times T$.  Moreover, by construction $\hat u_\ep$ is zero outside $\widetilde\Omega_\ep$ and so $\hat u$ vanishes on $(\mathbb{R}^3\setminus \Omega)\times Y_f$. Since ${\rm div}_\ep \tilde u_\ep=0$ in $\widetilde \Omega_\ep$,  by applying the change of variables (\ref{CV}) we get 
$$a_\ep^{-1}{\rm div}_y \hat u_\ep=0. $$
Multiplying by $a_\ep^{-1}$ and passing to the limit by using convergence (\ref{conv_vel_gorro}), we deduce ${\rm div}_y\,\hat u=0$ in $\mathbb{R}^3\times Y_f$ and so we get (\ref{divyproperty}).

It remains to prove that $\hat u$ is periodic in $y$. Thus follows by passing to the limit in the equality
$$a_\ep^{-2}\hat u_\ep\left(x'+a_\ep e_1, z_3,-{1\over 2}, y_2,y_3\right)=a_\ep^{-2}\hat u_\ep\left(x',z_3,{1\over 2}, y_2,y_3\right),$$
which is a consequence of definition (\ref{def:unfolding1}). This shows 
$$\hat u\left(x',z_3,-{1\over 2},y_2,y_3\right)=\hat u\left(x',z_3,{1\over 2},y_2,y_3\right),$$
and then is proved  the periodicity of $\hat u$ with respect to $y_1$. Similarly, we prove the periodicity with respect to $y_2$. To prove the periodicity with respect to $y_3$, we consider 
$$a_\ep^{-2}\hat u_\ep\left(x', z_3+{a_\ep\over \ep},y_1, y_2,-{1\over 2}\right)=a_\ep^{-2}\hat u_\ep\left(x',z_3,y_1, y_2,{1\over 2}\right),$$
and passing to the limit we have 
$$\hat u_\ep\left(x', z_3,y_1, y_2,-{1\over 2}\right)=a_\ep^{-2}\hat u_\ep\left(x',z_3,y_1, y_2,{1\over 2}\right),$$
which shows the periodicity with respect to $y_3$.

To finish, we prove relation (\ref{relation_u_ugorro}). To do this, from  the change of variables (\ref{CV}), we have
$$\int_{\widetilde \Omega_\ep} \tilde u_\ep(x',z_3)\,dx'dz_3=\int_{\Omega\times Y_f}\hat u_\ep(x',z_3,y)\,dx'dz_3dy.$$
By using extension of the velocity and the dilatation (\ref{dilatacion}), we get
$$\int_{\Omega} \tilde {U}_\ep(x',z_3)\,dx'dz_3=\int_{\Omega\times Y_f}\hat u_\ep(x',z_3,y)\,dx'dz_3dy.$$
Multiplying this equality by $a_\ep^{-2}$, passing to the limit by using convergences (\ref{conv_vel_gorro}) and (\ref{conv_vel_tilde}), we deduce
$$\int_\Omega u(x',z_3)\,dz_3=\int_{\Omega\times Y_f}\hat u(x',z_3,y)\,dy.$$
which, taking into account that $u_3=0$ in $\Omega$,  implies relation (\ref{relation_u_ugorro}). Finally, this together with relation (\ref{divxproperty}) gives the divergence condition (\ref{divyproperty_hat}). Finally, condition $\hat u=0$ in $(\mathbb{R}^3\setminus \Omega)\times Y_f$ implies  $\hat U=0$ in $(\mathbb{R}^2\setminus \omega)\times Y_f$. This and (\ref{divyproperty_hat}) imply (\ref{divyproperty_hat2}).
\end{proof}

\begin{lemma}\label{lemma_conv_pressure}
For a subsequence of $\ep$ still denoted by $\ep$, there exists  $p\in L^2_0(\omega)$ independent of $y_3$, such that 
 \begin{equation}\label{conv_pressure_sub}
\tilde P_\ep\to p\quad\hbox{ in }L^2(\Omega),
\end{equation} 
 \begin{equation}\label{conv_pressure_gorro}
\hat p_\ep \to p \quad\hbox{ in }L^2(\mathbb{R}^2\times Y)^2.
\end{equation}

\end{lemma}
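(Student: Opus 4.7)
The proof naturally splits into four stages, the last being the substantive obstacle.

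First, the uniform bounds in Lemma \ref{Estimates_extended_lemma} and Lemma \ref{estimates_hat} let me extract weakly convergent subsequences $\tilde P_\ep \rightharpoonup p^\ast$ in $L^2(\Omega)$ and $\hat P_\ep \rightharpoonup \hat p^\ast$ in $L^2(\mathbb{R}^3\times Y)$. The mean-zero constraint $\tilde P_\ep \in L^2_0(\Omega)$ is preserved, so $p^\ast \in L^2_0(\Omega)$. Then I would argue that both $p^\ast$ and $\hat p^\ast$ are functions of $x'$ only. For $p^\ast$, reading off the third component of $\|\nabla_\ep \tilde P_\ep\|_{H^{-1}(\Omega)^3} \leq C$ gives $\|\partial_{z_3}\tilde P_\ep\|_{H^{-1}(\Omega)} \leq C\ep$, which at the limit yields $\partial_{z_3}p^\ast = 0$, i.e.\ $p^\ast = p(x')\in L^2_0(\omega)$. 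For $\hat p^\ast$, I would multiply the unfolded variational formulation (\ref{form_var_hat_u}) by $a_\ep$ and pass to the limit: using the scaling $a_\ep/\ep \to 0$ together with the bounds of Lemma \ref{estimates_hat}, all contributions vanish except $\int_{\Omega\times Y}\hat p^\ast\,\text{div}_y\varphi\, dx'dz_3 dy = 0$ for $Y$-periodic test functions, which forces $\nabla_y\hat p^\ast = 0$. Combined with the analogous $z_3$-independence argument at the unfolded level, $\hat p^\ast$ depends only on $x'$.

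Next, I would identify $\hat p^\ast \equiv p$. Testing against a smooth slow function $\phi(x')$ and using the change of variables of Remark \ref{remarkCV},
\[
\int_{\mathbb{R}^3\times Y}\hat P_\ep(x',z_3,y)\,\phi(x')\,dx'dz_3 dy = \int_\Omega \tilde P_\ep(x',z_3)\,\phi(x')\,dx'dz_3 + O_\ep,
\]
because $\phi$ is constant up to $O(a_\ep)$ on each rescaled cell $\widetilde Y_{k,a_\ep}$. Passing to the limit on both sides and using $|Y|=1$ together with the independence results, $\int_\omega \hat p^\ast\,\phi = \int_\omega p\,\phi$ for every smooth $\phi$, so $\hat p^\ast = p$.

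The main obstacle is upgrading weak to strong convergence. My plan is to decompose $\tilde P_\ep = \bar P_\ep + (\tilde P_\ep - \bar P_\ep)$ where $\bar P_\ep(x') = \int_0^1 \tilde P_\ep(x',z_3)\,dz_3$. The fluctuation $\tilde P_\ep - \bar P_\ep$ has vanishing vertical mean and the smallness $\|\partial_{z_3}(\tilde P_\ep - \bar P_\ep)\|_{H^{-1}(\Omega)} = O(\ep)$ obtained in the second stage; combining a Poincaré--Wirtinger inequality in $z_3$ with the Ne\v{c}as inequality on vertical slices yields $\tilde P_\ep - \bar P_\ep \to 0$ strongly in $L^2(\Omega)$. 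For the vertical average $\bar P_\ep$, I would test the variational identity (\ref{extension_1}) against fields of the form $(\tilde\varphi(x'),0)$ with $\tilde\varphi \in H^1_0(\omega)^2$ (constant in $z_3$) in order to obtain a bound for $\nabla_{x'}\bar P_\ep$ in a space compactly embedded into $H^{-1}(\omega)^2$; Rellich's theorem combined with the Ne\v{c}as inequality on $\omega$ then promotes the weak convergence of $\bar P_\ep$ to strong convergence in $L^2(\omega)$. Strong convergence of $\hat P_\ep$ finally follows from that of $\tilde P_\ep$ via the norm identities of Proposition \ref{properties_um} and the $y$-independence of $p$.
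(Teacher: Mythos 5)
Your first three stages are essentially the paper's proof and are correct: extracting weak limits from Lemmas \ref{Estimates_extended_lemma} and \ref{estimates_hat}; reading off $\|\partial_{z_3}\tilde P_\ep\|_{H^{-1}(\Omega)}\leq C\ep$ from the third component of $\nabla_\ep\tilde P_\ep$ to get $z_3$-independence of $p$; multiplying (\ref{form_var_hat_u}) by $a_\ep$ and passing to the limit to get $y$-independence of the unfolded limit; and identifying $\hat p^\ast=p$ via the relation $\tilde p(x')=\int_Y\hat p\,dy$. All of that matches the paper.

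The fourth stage -- upgrading to strong convergence -- is where you diverge from the paper, and where your argument has a genuine gap. For the fluctuation $g_\ep=\tilde P_\ep-\bar P_\ep$ you only control $\|\partial_{z_3}g_\ep\|_{H^{-1}(\Omega)}=O(\ep)$. A Poincar\'e--Wirtinger inequality ``in $z_3$ on vertical slices'' would require a bound on $\partial_{z_3}g_\ep$ in $L^2_{x'}\big(H^{-1}_{z_3}(0,1)\big)$, but the inclusion $H^1_0(\Omega)\hookrightarrow L^2_{x'}(H^1_{0,z_3})$ goes the wrong way for this: a global $H^{-1}(\Omega)$ bound on $\partial_{z_3}g_\ep$ does \emph{not} give slice-wise $H^{-1}_{z_3}$ control. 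Applying the Ne\v{c}as inequality on $\Omega$ instead requires controlling $\nabla_{x'}g_\ep$ in $H^{-1}(\Omega)^2$ as well, which is only $O(1)$ -- the zero vertical mean does not remove that term. For the vertical average $\bar P_\ep$, you postulate ``a bound for $\nabla_{x'}\bar P_\ep$ in a space compactly embedded into $H^{-1}(\omega)^2$.'' No such uniform bound is available a priori: the only uniform bound coming from Lemma \ref{Estimates_extended_lemma} is $\|\nabla_\ep\tilde P_\ep\|_{H^{-1}(\Omega)^3}\leq C$, which is exactly the space you want to be compact \emph{in}, not compactly embedded \emph{from}. Assuming such a bound begs the question.

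What makes the paper's argument work is Tartar's trick, whose essential ingredient your decomposition does not capture. Combining (\ref{extension_1}) with the restriction-operator estimates (\ref{ext_1}) and the $a_\ep^2$-smallness of $D_\ep\tilde u_\ep$ from Lemma \ref{Estimates_lemma}, one gets, for any $\tilde\varphi$ independent of $z_3$ with $\tilde\varphi_3=0$, the \emph{improved} bound
\begin{equation*}
\left|\langle\nabla_\ep\tilde P_\ep,\tilde\varphi\rangle_{H^{-1}(\Omega)^3,H^1_0(\Omega)^3}\right|\leq C\left(\|\tilde\varphi\|_{L^2(\omega)^3}+a_\ep\|D_{x'}\tilde\varphi\|_{L^2(\omega)^{3\times 2}}\right).
\end{equation*}
This is strictly stronger than the generic $H^{-1}\times H^1_0$ pairing bound: the leading term is an $L^2$ norm, not an $H^1$ norm. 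It is this structural gain (coming from $\|D_\ep\tilde u_\ep\|\leq Ca_\ep$ against $\|D_\ep\tilde R^\ep\tilde\varphi\|\leq C a_\ep^{-1}\|\tilde\varphi\|_{L^2}+\dots$) that lets Rellich kick in: plugging $\tilde\varphi=\sigma'_\ep-\sigma'$ with $\sigma_\ep\rightharpoonup\sigma$ in $H^1_0(\omega)^3$, both terms vanish as $\ep\to 0$, giving convergence of the pairings $\langle\nabla_\ep\tilde P_\ep,\sigma_\ep\rangle\to\langle\nabla_{x'}p,\sigma\rangle$ against \emph{all} weakly convergent test sequences, hence strong $H^{-1}$-convergence of the gradient, and then strong $L^2$-convergence of $\tilde P_\ep$ via Ne\v{c}as. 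If you want to salvage your decomposition, the vertical-average part of it would still have to reproduce this improved duality bound; without it there is no compactness.
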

\begin{proof} Taking into account estimate the first estimate in (\ref{esti_P}) and (\ref{estim_P_hat}), we deduce that there exist $p\in L^2(\Omega)$ and $\hat p\in L^2(\Omega\times Y)$  such that, up to a subsequence, 
\begin{equation}\label{conv_pressure_sub_weak}
\tilde P_\ep\rightharpoonup p\quad\hbox{ in }L^2(\Omega),
\end{equation} 
 \begin{equation}\label{conv_pressure_gorro_weak}
\hat p_\ep \rightharpoonup p \quad\hbox{ in }L^2(\mathbb{R}^2\times Y).
\end{equation}

From convergence (\ref{conv_pressure_sub_weak}) we deduce that $\partial_{y_3}\tilde P_\ep$ also converges to $\partial_{y_3}p$ in $H^{-1}(\Omega)$. Also, from the second estimate in (\ref{esti_P}), we can deduce that  by noting that $\partial_{z_3}\tilde P_\ep$ converges to zero in  in  $H^{-1}(\Omega)$.  Then, by the uniqueness of the limit  we obtain $\partial_{z_3} p=0$ and so $\tilde p$ is independent of $z_3$. Since $\tilde p_\ep$ has null mean value in $\widetilde \Omega_\ep$, then $p$ has null mean value in $\omega$. Moreover, reasoning similarly to obtain relation (\ref{relation_u_ugorro}), it follows the relation $\tilde p(x')=\int_Y \hat p(x',z_3,y)\,dy$.

Finally, we shall prove that $\hat p$ is in fact equal to $p$ in $\omega$. To do that, it remains to prove that $\hat p$ does not depend on $y$. To do this  we multiply by $a_\ep$ the variational formulation (\ref{form_var_hat_u}), which gives 
$$
\begin{array}{l}\displaystyle{\nu\over a_\ep}\int_{\Omega\times Y_f}D_{y}\hat u_\ep:D_{y}\varphi\,dx'dz_3dy
-a_\ep\int_{\Omega\times Y}\hat P_\ep\,{\rm div}_{x'}\varphi'\,dx'dz_3dy-{a_\ep\over \ep}\int_{\Omega\times Y}\hat P_\ep\,\partial_{z_3}\varphi_3\,dx'dz_3dy\\
\noame
\displaystyle-\int_{\Omega\times Y}\hat P_\ep\,{\rm div}_{y}\varphi\,dx'dz_3dy=
a_\ep\int_{\Omega\times Y_f}f'\cdot \varphi'\,dx'dz_3dy+O_\ep\,,
\end{array}$$
 Thus, by using relation (\ref{parameters}) and convergences (\ref{conv_vel_gorro}) and (\ref{conv_pressure_sub}) and (\ref{conv_pressure_sub}), we  pass  to the limit  when $\ep$ tends to zero and we get 
$$\int_{\Omega\times Y}\hat p\,{\rm div}_{y}\varphi\,dx'dz_3dy=0, \quad \forall \varphi\in C_c^1(\Omega;H_{{\rm per}}^1(Y)^3),$$
 By density  it holds in $L^2(\Omega;H^1_{\rm per}(Y)^3)$ and implies that $p$ does not depend on the variable  $y$.

Finally, following  \cite{Tartar} adapted to the case of thin domains, we prove that the convergence of the pressure is in fact strong. As $\tilde u_3=0$ and $\tilde p$ only depends on $x'$, let $\sigma_\ep(x',z_3)=(\sigma_\ep'(x'),0)\in H^1_0(\omega)^3$ be such that 
\begin{equation}\label{strong_p_1}
\sigma_\ep\rightharpoonup \sigma\quad\hbox{in }H^1_0(\omega)^3. 
\end{equation}
Then, we have
$$\begin{array}{rl}
\displaystyle
\left|<\nabla_{\ep}\tilde P_\ep,\sigma_\ep>_{H^{-1}(\Omega),H^1_0(\Omega)}-<\nabla_{x'}p,\sigma>_{H^{-1}(\Omega),H^1_0(\Omega)}\right| \leq &\displaystyle
\left|<\nabla_{\ep}\tilde P_\ep,\sigma_\ep-\sigma>_{H^{-1}(\Omega),H^1_0(\Omega)}\right|\\
\noame\displaystyle &+\left|<\nabla_{\ep}\tilde P_\ep-\nabla_{x'}p,\sigma>_{H^{-1}(\Omega),H^1_0(\Omega)}\right|.
\end{array}$$
On the one hand, using convergence (\ref{lemma_conv_pressure}), we have 
$$\left|<\nabla_{\ep} \tilde P_\ep-\nabla_{x'}p,\sigma>_\Omega\right|=\left|\int_\Omega\left(\tilde P_\ep-p\right)\,{\rm div}_{x'}\sigma'\,dx\right|\to 0,\quad \hbox{as }\ep\to 0\,.$$
On the other hand, from (\ref{extension_1}) and proceeding as in the proof of Lemma \ref{Estimates_extended_lemma}, we have
$$\begin{array}{ll}
\left|<\nabla_{\ep}\tilde P_\ep,\sigma_\ep-\sigma>_\Omega\right|=& \left|<\nabla_{x'}\tilde P_\ep,\tilde R^\ep(\sigma_\ep'-\sigma')>_{\widetilde\Omega_\ep}\right|\\
\noame
& \leq C\left(\|\sigma'_\ep-\sigma'\|_{L^2(\omega)^3}+a_\ep\|D_{x'} (\sigma'_\ep-\sigma')\|_{L^2(\omega)^{3\times 2}}\right)\to 0\quad\hbox{as }\ep\to 0,
\end{array}$$
by virtue of (\ref{strong_p_1}) and the Rellich theorem. This implies that $\nabla_{\ep}\tilde P_\ep\to \nabla_{x'} p$ strongly in $H^{-1}(\Omega)^3$, which together the Ne${\breve{\rm c}}$as inequality, implies the strong convergence of the pressure $\tilde P_\ep$ given in (\ref{conv_pressure_sub}).  This and and the first relation given in (\ref{relation_norms}) prove that $\hat P_\ep$ also converges strongly to $p$. 
\end{proof}

\subsection{Proof of main result}\label{sec:mainthm}
In this subsection, we describe the asymptotic behavior of solution of problem (\ref{system_1_dimension_1}) by using the convergences given in Lemmas \ref{lemma_compactness} and \ref{lemma_conv_pressure}. First, we give the homogenized system satisfied by the unfolding functions and then, we introduce the local problems and give the homogenized flow model.
\begin{proof}[Proof of Theorem \ref{mainthm}]
We divide the proof in two steps. 

{\it Step 1}. From  Lemmas \ref{lemma_compactness} and \ref{lemma_conv_pressure}, we have that there exist $\hat u\in L^2(\mathbb{R}^2;H^1_{\rm per}(Y_f)^3)$ and $p\in L^2_0(\omega)$ as the limits of $a_\ep^{-2}\hat u_\ep$ and $\hat P_\ep$ satisfying convergences (\ref{conv_vel_gorro}) and (\ref{conv_pressure_gorro}) respectively.  In this step, we prove that  $(\hat U,p)$ satisfies the following homogenized system
\begin{equation}\label{system_1_2_hat}\left\{\begin{array}{rl}
-\nu\Delta_z \hat U+ \nabla_z\hat q= f' -\nabla_{x'} p & \hbox{in}\quad\omega\times Y_f, \\
\noame\displaystyle
{\rm div}_y \hat U =0& \hbox{in}\quad\omega\times Y_f, \\
\noame\displaystyle
{\rm div}_{x'}\left(\int_{Y_f}\hat U'\,dy\right)=0&\hbox{in }\omega, \\
\noame\displaystyle
\left(\int_{Y_f}\hat U'\,dy\right)\cdot n=0&\hbox{on }\partial\omega, \\
\noame\displaystyle
\hat U=0& \hbox{in }\omega\times T,\\
\noame
\hat q\in  L^2(\omega;L^2_{\rm per}(Y)/\mathbb{R}).
\end{array}\right.
\end{equation}
Taking into account Lemma \ref{lemma_compactness}, we have that $\hat U$ satisfies   conditions (\ref{system_1_2_hat})$_{2,3,4,5}$.

Now, we want to prove that $(\hat U, p)$ satisfies  equation (\ref{system_1_2_hat})$_1$. To do this, we consider $\varphi_\ep(x',z_3)=(\varphi'_\ep,\varphi_{\ep,3})$ with $\varphi_\ep'=\varphi'(x',z_3,x'/a_\ep,\ep z_3/a_\ep)$ and $\varphi_{\ep,3}=\ep \varphi_3(x',z_3,x'/a_\ep,\ep z_3/a_\ep)$ as test function in (\ref{form_var_hat_u}) where $\varphi(x',z_3,y)\in C_c^1(\Omega;H_{{\rm per}}^1(Y)^3)$  such that $\varphi=0$ in $\Omega\times T$,  ${\rm div}_{x'}(\int_{Y_f}\varphi'\,dy)=0$ in $\Omega$ and ${\rm div}_y \varphi=0$ in $\Omega\times Y_f$. Then, the variational formulation reads
$$
\begin{array}{l}\displaystyle {\nu\over a_\ep^2}\int_{\Omega\times Y_f}D_{y}\hat u_\ep :D_{y}\varphi\,dx'dz_3dy
- \int_{\Omega\times Y} \hat P_\ep\,{\rm div}_{x'}\varphi'\,dx'dz_3dy- \int_{\Omega\times Y}\hat P_\ep\,\partial_{z_3}\varphi_3\,dx'dz_3dy\\
\noame
\displaystyle=
\int_{\Omega\times Y_f}f'\cdot \varphi'\,dx'dz_3dy+O_\ep\,,
\end{array}$$
 We pass to the limit by using convergences (\ref{conv_vel_gorro}) and (\ref{conv_pressure_gorro}) and we obtain 
$$
\begin{array}{l}\displaystyle  \nu\int_{\Omega\times Y_f}D_{y}\hat u :D_{y}\varphi\,dx'dz_3dy
- \int_{\Omega\times Y}p\,{\rm div}_{x'}\varphi'\,dx'dz_3dy- \int_{\Omega\times Y}p\,\partial_{z_3}\varphi_3\,dx'dz_3dy=
\int_{\Omega\times Y_f}f'\cdot \varphi'\,dx'dz_3dy\,.
\end{array}$$
Taking into account that $p$ does not depend on $z_3$, we have that 
$$\int_{\Omega\times Y}p\,{\rm div}_{x'}\varphi'\,dx'dz_3dy=\int_{\Omega}p\,\left({\rm div}_{x'}\int_{Y_f}\varphi'\,dy\right)\,dx'dz_3=0\quad\hbox{and}\quad \int_{\Omega\times Y}p\,\partial_{z_3}\varphi_3\,dx'dz_3dy=0.$$
Then we obtain
\begin{equation}\label{limit_form_var}
\begin{array}{l}\displaystyle \nu \int_{\Omega\times Y_f}D_{y}\hat u :D_{y}\varphi\,dx'dz_3dy=
\int_{\Omega\times Y_f}f'\cdot \varphi'\,dx'dz_3dy\,.
\end{array}
\end{equation}
We take into account that there is no $z_3$-dependence in the obtained variational formulation. For that, we can
consider $\varphi$ independent of $z_3$, which implies that $(\hat U, p)$ satisfies the same variational formulation with integrals
in $\omega\times Y_f$. By density, this equality holds for every function in the Hilbert space $V$  defined by 
$$
V=\left\{\begin{array}{l}
\varphi(x',y)\in L^2(\omega;H^1_{\rm per}(Y)^3) \hbox{ such that }\\
\noame
\displaystyle {\rm div}_{x'}\left(\int_{Y_f}\varphi(x',y)\,dy \right)=0\hbox{ in }\omega,\quad \left(\int_{Y_f}\varphi(x',y)\,dy \right)\cdot n=0\hbox{ on }\partial\omega\\\noame
{\rm div}_y\varphi(x',y)=0\hbox{ in }\omega\times Y_f,\quad \varphi(x',y)=0\hbox{ in }\omega\times T
\end{array}\right\}\,.
$$
By Lax-Milgram lemma, the variational formulation (\ref{limit_form_var}) in the Hilbert $V$ admits a unique solution $\hat u$ in $V$. Reasoning as in  \cite{Allaire0}, the orthogonal of $V$ with respect to the usual scalar product in $L^2(\omega\times Y)$ is made of gradients of the form 
 $\nabla_{x'}q(x')+\nabla_z \hat q(x',z)$, with $q(x')\in L^2(\omega)/\mathbb{R}$ and $\hat q(x',y)\in L^2(\omega;L^2_{\rm per}(Y)/\mathbb{R})$. 
 Therefore, since $\hat U\in L^2(\omega;H^1_{\rm per}(Y_f)^3)$, $\hat U=0$ in $\omega\times T$, ${\rm div}_y\hat U=0$ in $\omega\times Y_f$, by integration by parts  we deduce that $\hat U$, $p$ and $\hat q$ satisfy system (\ref{system_1_2_hat}). It remains to prove that  $q$ coincides with pressure $p$. This can be easily done
by multiplying the variational formulation (\ref{form_var_hat_u}) by a test function $\varphi$ independent of $z_3$ with ${\rm div}_y$ equal to zero, and identifying limits. 

{\it Step 2}. In this step we deduce the expression for velocity $U$ given in (\ref{Darcy_velocity}) and the Darcy equation satisfied by $p$ given in (\ref{Darcy}). To do this, let us define the local problems which are useful to eliminate the variable $y$ of the previous homogenized
problem and then obtain a Darcy equation for the pressure $p$.  

Thus, we consider  $(w^{i},\pi^i)$, $i=1,2,3$, as the unique solution of problem (\ref{Local_problems}), see \cite{Tartar}, where we observe that 
\begin{equation}
w^3=0\quad\hbox{in }H^1_{\rm per}(Y_f)^3,\quad \pi^3=y_3\quad\hbox{in }L^2_{\rm per}(Y_f)/\mathbb{R}.
\end{equation}
Then, reasoning by linearity and uniqueness, we deduce 
\begin{equation}\label{identification}
\hat U(x',y)=\sum_{i=1}^2\left(f_i'(x')-\partial_{x_i}p(x')\right)w^i(y),\quad \hat q(x',y)=\sum_{i=1}^2\left(f_i'(x')-\partial_{x_i}p(x')\right)\pi^i(y).
\end{equation}
From relation (\ref{relation_u_ugorro}), we deduce expression (\ref{Darcy_velocity}) where the matrix $K$ is given by 
$$K_{ij}=\int_{Y_f}w^i_j(y)\,dy\quad i,j=1,2.$$
Let us now prove the Darcy equation (\ref{Darcy}). To do this, we recall that $\hat U$ satisfies conditions (\ref{system_1_2_hat})$_{3,4}$ and therefore, using identification (\ref{identification}), we get
\begin{equation}\label{identi_darcy}
\begin{array}{rl}
\displaystyle\sum_{j=1}^2\partial_{x_j}\left(\sum_{i=1}^2(f_i'-\partial_{x_i}p)\int_{Y_f}w^i_j\,dy\right)={\rm div}_{x'}\int_{Y_f}\hat U'(x',y)\,dy=0&\quad\hbox{in }\omega,\\
\noame
\displaystyle
\left(\sum_{i=1}^2(f_i'-\partial_{x_i}p)\int_{Y_f}w^i_j\,dy\right)\cdot n=\left(\int_{Y_f}\hat U'(x',y)\,dy\right)\cdot n=0&\quad\hbox{on }\partial \omega.
\end{array}
\end{equation}
On the other hand, we observe that taking $w^i$ as test function in the equation satisfied by $w^j$, we have 
\begin{equation}\label{Atest}
\int_{Y_f}w^i_j(y)\,dy=\nu\int_{Y_f}Dw^{i}:Dw^j\,dy,\quad i,j=1,2,3.
\end{equation}
Thus,  (\ref{identi_darcy}) proves that $p$ satisfies the Darcy equation (\ref{Darcy}), which is an elliptic equation with  $K$  a symmetric and positive definite matrix, see \cite{Tartar}.  Then problem (\ref{Darcy}) has a unique solution. Moreover, as  $K f'\in L^2(\Omega)$ and $Y_f$ is a subset of $Y$ which is smooth and connected, from regularity results for problem  (see Chapter 7 in \cite{Sanchez}) we conclude that  $p\in H^1(\omega)$, which implies that $U$ (and so $\hat U$) is also unique. This proves that the entire sequence $(a_\ep^{-2}\tilde u_\ep,p_\ep)$ converges to $(u,p)$.
 This finishes the proof.
 \end{proof}


\begin{thebibliography}{00}
\bibitem{Allaire0} G. Allaire,  Homogenization of the Stokes flow in a connected porous medium. Asymp. Anal., 2  (1989) 203-222.

\bibitem{arbogast} T. Arbogast, J. Douglas J.R. and U. Hornung, Derivation of the double porosity model of single phase flow via homogenization theory, SIAM J. Math. Anal., 21 (1990) 823-836.

\bibitem{Anguiano1} M. Anguiano, F.J. Su\'arez-Grau, Homogenization of an incompressible non-Newtonian flow through a thin porous
medium, Z. Angew. Math. Phys. (2017) 68:45.
\bibitem{Anguiano_MJOM} M. Anguiano, F.J. Su\'arez-Grau, The Transition Between the Navier-Stokes Equations to the Darcy Equation in a Thin Porous Medium, Mediterr. J. Math. (2018) 15:45.

 \bibitem{Bayada_coupling}G. Bayada, N. Benhaboucha, M. Chambat, Modeling of a thin film passing a thin porous medium, Asymptotic Analysis 37 (2004) 227-256.
 
 
\bibitem{Ciora} D. Cioranescu, A. Damlamian and G. Griso, Periodic unfolding and homogenization, C.R. Acad. Sci. Paris Ser. I, 335 (2002) 99-104.
%
%
\bibitem{Ciora2} D. Cioranescu, A. Damlamian and G. Griso, The periodic unfolding method in homogenization, SIAM J. Math. Anal.,  40 (2008) 1585-1620.

%
\bibitem{Darcy} P. G. Darcy, Les fontaines publiques de la ville de Dijon Paris. Victor Dalmont, 1856.


\bibitem{Fabricius}  J. Fabricius, J. Gunnar I. Hellstr${\rm \ddot{o}}$m, T.  Lundstr${\rm \ddot{o}}$m, E. Miroshnikova and P. Wall, Darcy's Law for 
flow in a periodic thin porous medium confined between two parallel plates, Transp. Porous Med.,
115 (2016) 473-493.

\bibitem{Hornung} U. Hornung,  Homogenization and Porous Media, Springer, New York, 1997.
\bibitem{Lions} J.-L Lions,  Some Methods in the Mathematical Analysis of Systems and Their Control. Science Press and
Gordon and Breach, Beijing, 1981.
\bibitem{Sanchez} E. Sanchez-Palencia, Nonhomogeneous media and vibration theory, Lecture Notes in Physics 127, Springer-Verlag, 1980.
\bibitem{Tartar} L. Tartar, Incompressible fluid flow in a porous medium convergence of the homogenization process. Appendix to Lecture Notes in Physics, 127, 1980.
\bibitem{Temam} R. Temam, Navier-Stokes Equations, North Holland, 1984.
\end{thebibliography}
\end{document}